\renewcommand{\MR}[1]{ \href{https://mathscinet.ams.org/mathscinet-getitem?mr=MR#1}{MR#1}}	
\newcommand{\Zbl}[1]{ \href{https://zbmath.org/?q=an:#1}{Zbl~#1}}	
\newcommand{\JFM}[1]{ \href{https://zbmath.org/?q=an:#1}{JFM~#1}}	
\newcommand{\arxiv}[1]{\href{https://arxiv.org/abs/#1}{{\tt arXiv:#1}}}	
\tikzset{
  on each segment/.style={
    decorate,
    decoration={
      show path construction,
      moveto code={},
      lineto code={
        \path [#1]
        (\tikzinputsegmentfirst) -- (\tikzinputsegmentlast);
      },
      curveto code={
        \path [#1] (\tikzinputsegmentfirst)
        .. controls
        (\tikzinputsegmentsupporta) and (\tikzinputsegmentsupportb)
        ..
        (\tikzinputsegmentlast);
      },
      closepath code={
        \path [#1]
        (\tikzinputsegmentfirst) -- (\tikzinputsegmentlast);
      },
    },
  },
  mid arrow/.style={postaction={decorate,decoration={
        markings,
        mark=at position .5 with {\arrow[#1]{stealth}}
      }}},
}
\title[Choice vs.\ chromatic number for graphs on orientable surfaces]%
{The choice number versus the chromatic number for graphs embeddable on orientable surfaces}
\author[N. Balachandran]{Niranjan Balachandran}
\address{Department of Mathematics, Indian Institute of Technology Bombay, Powai, Mumbai 400076}
\email{niranj@math.iitb.ac.in}
\author[B. Sankarnarayanan]{Brahadeesh Sankarnarayanan}
\address{Department of Mathematics, Indian Institute of Technology Bombay, Powai, Mumbai 400076}
\email{bs@math.iitb.ac.in}
\thanks{Research of Brahadeesh Sankarnarayanan was supported by the National Board for Higher Mathematics (NBHM),
Department of Atomic Energy (DAE), Govt.\ of India.}
\subjclass[2020]{Primary 05C15; Secondary 05C10, 05C35, 05C75}
\keywords{Chromatic number, choice number, toroidal graph, triangulation, regular graph}
\date{\today}
\begin{document}

\begin{abstract}
We show that for loopless \(6\)-regular triangulations on the torus the gap between the choice
number and chromatic number is at most \(2\). We also show that the largest gap for
graphs embeddable in an orientable surface of genus \(g\) is of the order \(\Theta(\sqrt{g})\),
and moreover for graphs with chromatic number of the order \(o(\sqrt{g}/\log_{2}(g))\) the largest gap
is of the order \(o(\sqrt{g})\).
\end{abstract}

\maketitle

\section{Introduction}\label{S:Introduction}

We shall denote by \(\Nat\) the set of natural numbers \(\Set{ 0, 1, 2, \dotsc }\).
For a graph \(G = (V, E)\), the notation \(\diredge{v}{w}\)
shall indicate that the edge \(\edge{v}{w} \in E\) is oriented from the vertex \(v\)
to the vertex \(w\). A graph \(G\) is \defining{directed} if every edge of \(G\)
is oriented. All logarithms in this paper are to the base \(2\).
We freely make use of the Bachmann--Landau--Knuth
notations, but state them briefly for completeness below.
Let \(f\) and \(g\) be positive functions of real variables.
(1)~\(f = \littleoh{g}\) if \(\lim_{x \to \infty} f(x)/g(x) = 0\);
(2)~\(f \leq \bigoh{g}\) if there is a constant \(M > 0\) such that \(\limsup_{x \to \infty} f(x)/g(x) \leq M\);
(3)~\(f \geq \bigomega{g}\) if \(g \leq \bigoh{f}\); and
(4)~\(f = \bigtheta{g}\) if \(\bigomega{g} \leq f \leq \bigoh{g}\).

A \defining{(vertex) coloring} of a graph \(G = (V, E)\)
is an assignment of a ``color'' to each vertex, that is, an
assignment \(v \mapsto \Color(v) \in \Nat\) for every \(v \in V(G)\).
A coloring of \(G\) is \defining{proper} if adjacent vertices
receive distinct colors.
\(G\) is \defining{\(k\)-colorable} if
there exists a proper coloring of the vertices using at most \(k\) colors.
The least integer \(k\) for which \(G\) is \(k\)-colorable is called the
\defining{chromatic number} of \(G\) and is denoted \(\chr(G)\).
If \(\chr(G) = k\), we also say that \(G\) is \defining{\(k\)-chromatic}.

A variation of \(k\)-colorability called \defining{\(k\)-choosability}
was defined independently by Vizing~\cite{Viz76} in 1976
and Erd{\H o}s, Rubin and Taylor~\cite{ErdRubTay80} in 1979.
A \defining{list assignment} \(\ListAsgn{L}\) on \(G\) is a
collection of sets of the form \(\ListAsgn{L} = \Set{ \List{L}{v} \subset \Nat \given v \in V(G) }\),
where one thinks of each \(\List{L}{v}\) as a \defining{list} of colors available for coloring the vertex
\(v \in V(G)\). \(G\) is \defining{\(\ListAsgn{L}\)-choosable}
if there exists a proper coloring of the vertices such that \(\Color(v) \in \List{L}{v}\)
for every \(v \in V(G)\). \(G\) is \defining{\(k\)-choosable}
if it is \(\ListAsgn{L}\)-choosable
for every list assignment \(\ListAsgn{L}\) with \(\card{\List{L}{v}} \leq k\)
for all \(v \in V(G)\). The least
integer \(k\) for which \(G\) is \(k\)-choosable is called the \defining{choice number}
or \defining{list chromatic number} of \(G\) and is denoted \(\ch(G)\). If \(\ch(G) = k\),
we also say that \(G\) is \defining{\(k\)-list-chromatic}.

The choice number generalizes the chromatic number in the following sense:
if \(\ListAsgn{L}\) is a list assignment in which all the lists
\(\List{L}{v}\) are identical and have cardinality at most \(k\),
then \(G\) is \(\ListAsgn{L}\)-choosable if and only if \(G\) is \(k\)-colorable.
Hence, \(\chr(G) \leq \ch(G)\) for any graph \(G\). In general, however,
\(\chr(G) < \ch(G)\), so it behooves one
to investigate the nature of the gap between the choice number and chromatic number.

One line of investigation is to examine \defining{chromatic-choosable} graphs,
that is, graphs that satisfy \(\chr(G) = \ch(G)\).
We mention one of the important results
concerning such graphs, conjectured by Ohba~\cite{Ohb02} in 2002
and subsequently settled in the affirmative by Noel, Reed and Wu~\cite{NoeReeWu15} in 2015:
if \(G\) is a graph on at most \(2\chr(G) + 1\) vertices, then it satisfies \(\chr(G) = \ch(G)\).

The opposite line of investigation is to examine the width of the gap between the
chromatic number and choice number. In \cite{ErdRubTay80}, Erd{\H o}s, Rubin and Taylor
showed that there are bipartite graphs (that is, graphs with \(\chr(G) = 2\))
that have arbitrarily large choice number;
more precisely, they showed that \(\ch(K_{n, n}) > k\) if \(n \geq \binom{2k-1}{k}\).
At first glance it appears that this line of investigation is thus fruitless,
but one has to note that the graphs \(K_{n, n}\) have high average degree.
In fact, Alon~\cite{Alo00} showed in 2000 
that \(\ch(G) \geq \parens[\big]{\frac{1}{2}-\littleoh{1}}\log(\delta)\),
where \(\delta \equiv \delta(G)\) is the minimum degree of \(G\).

Thus, one is motivated to bound the minimum degree of graphs
in order to examine the gap between the chromatic
number and choice number. A natural criterion for doing so
is to consider graphs that are embeddable in a fixed surface.
By a \defining{surface} we mean a compact connected
\(2\)-manifold. Informally, a graph \(G = (V, E)\) is \defining{embeddable}
in a surface if there exists a drawing of \(G\) on the surface
without any crossing edges. By the classification of surfaces theorem,
every orientable surface is homeomorphic to a sphere with \(g \geq 0\)
handles, denoted \(S_{g}\), and every nonorientable surface is homeomorphic to
a sphere with \(k \geq 1\) crosscaps, denoted \(N_{k}\). The \defining{genus}
of the surface \(S_{g}\) (resp.\ \(N_{k}\)) is defined to be \(g\)
(resp.\ \(k\)). We shall primarily restrict our attention to graphs embeddable on orientable surfaces
in what follows.

Now, suppose that \(G = (V, E)\) is a connected graph
which is embeddable in \(S_{g}\), \(g \geq 0\). Choose an embedding
and denote by \(\numv\), \(\nume\) and \(\numf\) the
number of vertices, edges and faces of \(G\),
respectively, in this embedding.
Euler's formula says that \(\numv - \nume + \numf \geq 2 - 2g\),
with equality holding if every face is homeomorphic to a disc.
Let \(F \equiv F(G)\) be the set of faces of \(G\) in this embedding.
If \(\degree(f) \geq 3\) for each \(f \in F\), then we have \(3\numf \leq 2\nume\),
and this bounds the minimum degree as \(\delta(G) \leq 2\nume/\numv \leq 6 + 12(g - 1)/\numv\).
Hence, \(\delta(G) \leq 5\) if \(g = 0\) and \(\delta(G) \leq 12g + 6\) if \(g \geq 1\).
Thus, we make the following definitions:
\begin{definition}
	For a graph \(G = (V, E)\), define the \defining{jump of \(G\)}
	by \(\jump(G) \defn \ch(G) - \chr(G)\). For each \(g \geq 0\),
	define the \defining{jump at \(g\)} by
	\[
		\jump(g) \defn \max\Set{ \jump(G) \given G \text{ is embeddable in } S_{g} }.
	\]	
\end{definition}

For graphs embeddable in the sphere \(S_{0}\)
(equivalently, for planar graphs), the investigation of \(\jump(0)\)
was indicated by Erd{\H o}s, Rubin and Taylor~\cite{ErdRubTay80}
through the following conjectures and question:
\begin{itemize}[font=\bfseries]
	\item[(C1)]\label{C1} Every planar graph is \(5\)-choosable.
	
	\item[(C2)]\label{C2} There exists a planar graph that is not \(4\)-choosable.
	
	\item[(Q)]\label{Q} Does there exist
	a planar bipartite graph that is not \(3\)-choosable?
\end{itemize}
Alon and Tarsi~\cite{AloTar92} in 1992 answered
\textbf{(Q)} in the negative by showing that every planar bipartite graph is \(3\)-choosable;
this is also best possible since there are simple examples~\cite{ErdRubTay80} of planar bipartite
graphs that are not \(2\)-choosable.
Voigt~\cite{Voi93} in 1993 settled \textbf{(C2)} positively 
by constructing a planar graph on 238 vertices that is not \(4\)-choosable,
and Thomassen~\cite{Tho94b} in 1994 settled \textbf{(C1)} positively 
through a remarkably short and elegant proof.

Thus, \(\jump(0) \leq 2\), with equality holding if and only if
there exists a planar \(3\)-chromatic graph that is not \(4\)-choosable.
Mirzakhani~\cite{Mir96} in 1996 constructed such a graph on 63 vertices;
parallelly, Voigt and Wirth~\cite{VoiWir97} in 1997
observed that a non-\(4\)-choosable planar graph on 75 vertices constructed
by Gutner~\cite{Gut96} in 1996 is \(3\)-chromatic.
Thus, it is known that \(\jump(0) = 2\).

We pose the analogous question for toroidal graphs:
\begin{question}\label{Q:toroidal-bound}
	What is \(\jump(1)\)?
	That is, how large can the gap between the choice number and chromatic number be
	for a toroidal graph?
\end{question}
Since every planar graph is also toroidal, the maximum gap
for toroidal graphs cannot be smaller than \(2\).
In \cref{S:jump-triangulation}, we examine \(6\)-regular triangulations on the torus and
show the following result:
\begin{theorem}\label{T:Main}
	For any loopless \(6\)-regular triangulation \(G\) on the torus, \(\jump(G) \leq 2\).
\end{theorem}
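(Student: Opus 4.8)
The plan is to reduce \cref{T:Main} to a short analysis on the value of \(\chr(G)\), organized around the classification of loopless \(6\)-regular toroidal triangulations as quotients \(\Lambda \backslash L\) of the triangular lattice \(L\) by a finite-index group \(\Lambda\) of translations. Such a \(G\) on \(n\) vertices has \(3n\) edges and \(2n\) faces, and it inherits from \(L\) a row structure: the vertices fall into rows, each row is a non-contractible cycle, and consecutive rows are stitched into triangulated cylinders that close up cyclically. Two general facts set the frame. Since \(G\) is a simple toroidal graph it is \(6\)-degenerate, so \(\ch(G) \leq 7\) with no further work; and the only \(7\)-critical toroidal graph is \(K_7\), which is itself a \(6\)-regular toroidal triangulation. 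As a \(6\)-regular graph cannot properly contain \(K_7\), we get \(\chr(G) = 7\) if and only if \(G \cong K_7\), and then \(\jump(G) = 0\) since complete graphs are chromatic-choosable.

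With \(K_7\) set aside, I would first clear the easy range of the chromatic number. If \(\chr(G) \geq 5\), then \(\jump(G) = \ch(G) - \chr(G) \leq 7 - 5 = 2\) is immediate from \(\ch(G) \leq 7\). The whole force of the theorem therefore sits in the two cases where this trivial bound is too weak: when \(\chr(G) = 3\) I must prove \(\ch(G) \leq 5\), and when \(\chr(G) = 4\) I must prove \(\ch(G) \leq 6\). The chromatic side of the split is governed by the canonical proper \(3\)-coloring of \(L\) given by \((x, y) \mapsto x + y \pmod 3\): it descends to \(G\) precisely when \(\Lambda\) lies in its kernel, which is exactly the case \(\chr(G) = 3\), and otherwise \(\chr(G) \geq 4\).

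For the two choosability bounds the plan is a cutting argument reducing \(G\) to a planar list-coloring problem. I would take a non-contractible cycle \(C\) along a row and cut the torus open along \(C\) (together with an arc joining its two sides) to obtain a planar near-triangulation of a disc whose boundary is assembled from the two copies \(C'\) and \(C''\) of \(C\). In the triangulated row structure every vertex off the seam has at most two neighbors on it, so deleting or precoloring the seam leaves an instance with interior lists of size \(5\) and boundary lists of size at least \(3\) — the setting in which Thomassen-type extension techniques for planar near-triangulations operate. Carrying this through yields \(\ch(G) \leq 6\) in general; for \(\chr(G) = 3\) I would exploit the extra rigidity of the balanced quotient — where each vertex has exactly three neighbors in each of the other two color classes, so that deleting one class leaves a cubic bipartite graph — to recover the sharper bound \(\ch(G) \leq 5\).

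I expect the main obstacle to be the toroidal identification inherent in the cut. The two boundary copies \(C'\) and \(C''\) must carry the \emph{same} coloring, so the seam cannot be colored freely and then extended; rather one faces a constrained precoloring-extension problem along a cycle that carries chords from the triangulation, and must produce a seam coloring that is consistent across the identification while simultaneously leaving every incident vertex a usable list of size at least \(3\). Securing such a seam coloring — and separately disposing of the finitely many quotients whose edge-width is too small to admit a clean cut — is where the real difficulty lies; by comparison the planar extension afterwards is routine.
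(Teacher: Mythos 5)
Your reduction of the problem is sound and matches the paper's frame: since \(\ch(G) \leq 7\) and a simple triangulation has \(\chr(G) \geq 3\), everything comes down to proving \(\ch(G) \leq 5\) when \(\chr(G) = 3\) and \(\ch(G) \leq 6\) when \(\chr(G) = 4\). But two points in your frame need repair. First, your criticality argument only controls the \emph{chromatic} number: ``the only \(7\)-critical toroidal graph is \(K_{7}\)'' tells you when \(\chr(G) = 7\), whereas the \(\chr(G) = 4\) case needs to rule out \(\ch(G) = 7\). That is exactly the choosability version of Dirac's theorem due to B\"ohme--Mohar--Stiebitz (\cref{T:Dirac}), which the paper invokes; with that citation the \(\chr(G) = 4\) case is finished immediately and needs no cutting argument at all. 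Second, your lattice-quotient picture implicitly assumes \(G\) is simple; the loopless triangulations with multiple edges (the graphs \(T(2, s, t)\) and certain \(T(1, s, t)\)) need a separate treatment, which the paper dispatches with Brooks's theorem (\cref{T:Brooks}).

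The genuine gap is that the heart of the theorem --- every simple \(3\)-chromatic \(6\)-regular toroidal triangulation is \(5\)-choosable --- is not proved, and the cut-and-extend strategy you propose for it does not work as described. After cutting along a non-contractible cycle \(C\) and precoloring the seam, you are left with an annulus triangulation in which \emph{both} boundary cycles carry restrictions; Thomassen's inductive theorem allows a single precolored \emph{edge} on the outer face with \(3\)-lists on the remaining outer vertices, and says nothing about a second restricted cycle bounding an inner face. Indeed, extension from even one long precolored cycle can fail in general (handling such configurations is the subject of far deeper machinery, e.g.\ Postle--Thomas), so your closing claim that ``the planar extension afterwards is routine'' is false --- the difficulty you relegate to the seam pervades the whole extension step. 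The paper avoids cutting entirely: using the unique \(3\)-coloring \(I_{1}, I_{2}, I_{3}\), it colors \(I_{1}\) along the cycles of the horizontal/vertical subgraph \(H\) via a path lemma (\cref{L:KeyLemma}) so that every vertex of \(I_{2}\) retains a \(3\)-list and every vertex of \(I_{3}\) a \(2\)-list, and then applies the Alon--Tarsi theorem (\cref{T:Alon--Tarsi}) to the cubic bipartite graph \(G - I_{1}\) with an orientation (out-degree \(2\) on \(I_{2}\), \(1\) on \(I_{3}\)) that has no odd Eulerian subgraphs. Your observation that deleting a color class leaves a cubic bipartite graph points in the right direction, but ``exploit the extra rigidity'' is not an argument: the mixed list sizes require precisely this orientation step together with the careful coloring of \(I_{1}\), neither of which your proposal supplies.
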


While computing \(\jump(g)\) precisely for larger values of \(g\)
seems difficult, we are able to describe the
asymptotic behavior of \(\jump(g)\) in \cref{S:asymptotics}
as follows:
\begin{theorem}\label{T:asymp1}
	\(\jump(g) = \bigtheta{\sqrt{g}}\). That is, there exist two positive constants \(c_{1}\) and \(c_{2}\) such that
	\[
		c_{1} \sqrt{g} \leq \jump(g) \leq c_{2} \sqrt{g}
	\]
	for all sufficiently large \(g\).
\end{theorem}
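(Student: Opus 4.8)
The plan is to prove the two inequalities separately, each of order $\sqrt{g}$. For the upper bound I would discard the chromatic number entirely, using
\[
	\jump(g) = \max_{G} \bracks{ \ch(G) - \chr(G) } \leq \max_{G}\, \ch(G),
\]
the maxima being over graphs embeddable in $S_{g}$. It then suffices to show every such $G$ has $\ch(G) = \bigoh{\sqrt g}$. This follows from a degeneracy estimate: if $H$ is any subgraph on $\numv$ vertices, then, exactly as in the Euler-formula computation of the introduction, $\nume \leq 3\numv + 6(g-1)$, so the minimum degree of $H$ is at most $\min\Set{ \numv - 1,\ 6 + 12(g-1)/\numv }$; optimizing over $\numv$ (the two bounds cross near $\numv \approx \sqrt{12g}$) shows this is $\bigoh{\sqrt g}$. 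Hence $G$ is $\bigoh{\sqrt g}$-degenerate, and a $d$-degenerate graph is $(d+1)$-choosable by a greedy argument, giving $\ch(G) \leq \bigoh{\sqrt g}$. This is essentially the choosability analogue of Heawood's bound.

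For the lower bound I would exhibit an explicit family. Let $G_{s}$ be the complete multipartite graph with $s$ parts each of size $3$, denoted $K_{3 \ast s}$ (any fixed part size $\geq 3$ works, size $3$ being the most efficient, since parts of size $2$ yield a chromatic-choosable graph). Three facts are needed. First, $\chr(G_{s}) = s$. Second, the orientable genus $g_{s}$ of $G_{s}$ is $\bigtheta{s^{2}}$: the bound $g_{s} \geq \bigomega{s^{2}}$ comes from Euler's formula applied to $\numv = 3s$ and $\nume = 9\binom{s}{2}$ together with $3\numf \leq 2\nume$, while $g_{s} \leq \bigoh{s^{2}}$ comes from the general estimate $\gamma(G) \leq (\nume - \numv + 1)/2$ for the orientable genus. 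Third, $\ch(G_{s}) \geq s + \bigomega{s}$; concretely $\ch(G_{s}) = \lceil (4s-1)/3 \rceil$, so that $\jump(G_{s}) = \bigtheta{s}$. Combining, $\jump(g_{s}) \geq \jump(G_{s}) = \bigtheta{s} = \bigtheta{\sqrt{g_{s}}}$.

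To pass from the sparse sequence $\Set{ g_{s} }$ to all large $g$, I would use that $\jump$ is nondecreasing in $g$: any graph embeddable in $S_{g'}$ is embeddable in $S_{g}$ whenever $g' \leq g$, so $\jump(g') \leq \jump(g)$. Since $g_{s} = \bigtheta{s^{2}}$ forces $g_{s+1}/g_{s} \to 1$, for large $g$ one chooses $s$ with $g_{s} \leq g < g_{s+1}$ and obtains $\jump(g) \geq \jump(g_{s}) \geq \bigomega{s} \geq \bigomega{\sqrt g}$, completing the lower bound.

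The main obstacle is the third fact: showing that these complete multipartite graphs are far from chromatic-choosable, with a gap linear in the number of parts. The upper estimate $\ch(G_{s}) \leq \lceil (4s-1)/3 \rceil$ is a greedy/degeneracy-type argument, but the matching lower estimate requires constructing, for each $s$, a list assignment $\ListAsgn{L}$ with $\card{\List{L}{v}}$ just under $4s/3$ admitting no proper coloring; this combinatorial construction (equivalently, invoking the known value of $\ch(K_{3 \ast s})$) is the crux of the argument. Note that it forces $\chr(G_{s}) = s = \bigtheta{\sqrt g}$, which is consistent with the companion assertion of the abstract that a chromatic number of order $\littleoh{\sqrt g / \log_{2}(g)}$ cannot produce a gap of the full order $\sqrt g$.
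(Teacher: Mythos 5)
Your proof is correct, but your lower bound takes a genuinely different route from the paper's. The upper bound is essentially the same in both: you re-derive the choosability analogue of Heawood's bound via degeneracy plus greedy coloring, while the paper simply invokes \cref{T:Heawood} (whose proof is exactly that degeneracy argument), so both get \(\jump(g) \leq \ch(G) \leq \bigoh{\sqrt{g}}\). For the lower bound, the paper fixes a \emph{large} part size \(m\), uses Alon's probabilistic estimate \(\ch(K_{m * r}) \geq c_{1} r \log(m)\) (\cref{T:Alon-multipartite}) with \(m\) chosen so that \(c_{1}\log(m) \geq 2\), and embeds \(K_{m * r}\) by starting from the Ringel--Youngs embedding of \(K_{r}\) (\cref{T:Ringel-Youngs}) and adding one handle per extra edge. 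You instead take part size \(3\) and invoke Kierstead's exact value \(\ch(K_{3 * s}) = \ceil{(4s-1)/3}\) \cite{Kie00} --- a result the paper cites only in its concluding section, not in this proof --- together with the general cellular-embedding bound \(\gamma(G) \leq (\nume - \numv + 1)/2\). Your route buys explicit constants (a gap of \((s-1)/3\) against a genus of at most \(9s^{2}/4\), so \(\jump(g) \gtrsim \tfrac{2}{9}\sqrt{g}\)) and avoids Ringel--Youngs entirely; its cost is reliance on Kierstead's theorem, whose lower-bound list construction is nontrivial, as you rightly flag as the crux. The paper's route relies instead on Alon's asymptotic bound, which carries unspecified constants but is more flexible (it shows the jump of \(K_{m * r}\) grows like \(r\log(m)\) for every \(m\)), and both arguments close with the same monotonicity-plus-interpolation step. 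One quibble: your assertion that \(g_{s} = \bigtheta{s^{2}}\) ``forces \(g_{s+1}/g_{s} \to 1\)'' is neither justified nor needed --- \(\bigtheta{s^{2}}\) is compatible with the ratio oscillating between the implied constants. What your argument actually uses is only the upper bound \(g_{s+1} \leq \bigoh{s^{2}}\), so that \(g < g_{s+1}\) yields \(s \geq \bigomega{\sqrt{g}}\); your genus estimate already supplies this, so the slip is cosmetic.
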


A natural follow-up is to investigate for which graphs
this largest gap is attained, for which the
following classical result is useful:
\begin{theorem}[Heawood~\cite{Hea90}, 1890]\label{T:Heawood}
	Let \(g \geq 1\). If \(G\) is embeddable in \(S_{g}\), then
	\[
		\chr(G) \leq \ch(G) \leq H(g) \defn \floor*{\frac{7 + \sqrt{1 + 48g}}{2}}.
	\]
\end{theorem}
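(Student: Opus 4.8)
The plan is to prove the upper bound \(\ch(G) \leq H(g)\), since the lower bound \(\chr(G) \leq \ch(G)\) has already been observed; this is the list-coloring refinement of Heawood's original counting argument. Write \(H \defn H(g)\). As deleting parallel edges changes neither the chromatic nor the choice number, I may assume \(G\) is simple, and as both parameters only decrease on passing to subgraphs, it suffices to derive a contradiction from the assumption that some simple graph embeddable in \(S_{g}\) has choice number at least \(H+1\). I would then pass to a subgraph \(G'\) that is minimal (with respect to deletion of vertices and edges) subject to \(\ch(G') \geq H+1\); this makes \(G'\) an \emph{\((H+1)\)-list-critical} graph, meaning \(\ch(G'') \leq H\) for every proper subgraph \(G''\). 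Crucially, \(G'\) is still embeddable in \(S_{g}\), so Euler's formula applies to it.

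The first key step is a greedy \emph{minimum-degree lemma}: every \((H+1)\)-list-critical graph has minimum degree at least \(H\). Indeed, if some vertex \(v\) had \(\deg(v) \leq H-1\), then given any list assignment with all lists of size \(H\), I would first color \(G' - v\) (possible since \(\ch(G'-v) \leq H\)) and then extend to \(v\): its at most \(H-1\) colored neighbors forbid at most \(H-1\) of the \(H\) colors in \(\List{L}{v}\), leaving a usable color. This would make \(G'\) itself \(H\)-choosable, contradicting list-criticality. Let \(n\) and \(m\) denote the number of vertices and edges of \(G'\). Since \(\ch(G') \leq n\) always (color greedily in any order from lists of size \(n\)), the hypothesis forces \(n \geq H+1\), while the lemma gives \(2m = \sum_{v} \deg(v) \geq Hn\).

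The second step is the counting. Because every face of a simple embedded graph on at least three vertices is bounded by at least three edges, Euler's formula yields \(m \leq 3n - 6 + 6g\), hence \(2m \leq 6n - 12 + 12g\). Combining this with \(Hn \leq 2m\) gives \((H-6)n \leq 12g - 12\); since \(H \geq 7\) for \(g \geq 1\), I conclude \(n \leq (12g-12)/(H-6)\). Feeding in \(n \geq H+1\) produces \((H+1)(H-6) \leq 12g - 12\), that is, \((H-2)(H-3) \leq 12g\).

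The final step, and the point where care is needed, is to contradict this using the precise definition of \(H\). The floor in \(H(g) = \floor*{(7+\sqrt{1+48g})/2}\) gives \(H > (5 + \sqrt{1+48g})/2\), whence \(2H - 5 > \sqrt{1+48g}\); squaring (both sides positive since \(H \geq 7\)) and simplifying yields \((H-2)(H-3) = H^2 - 5H + 6 > 12g\), which directly contradicts the inequality just obtained. I expect the delicate part to be exactly this arithmetic matching: the weaker \emph{average}-degree bound \(2m/n \leq 6 + 12(g-1)/n\) is not strong enough to close the gap, so it is essential to exploit list-criticality to obtain the \emph{minimum}-degree bound \(\delta \geq H\), and then to track the floor function carefully so that the two quadratic inequalities in \(H\) abut exactly.
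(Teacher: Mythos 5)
Your proof is correct. The paper itself contains no proof of this theorem---it cites Heawood (1890) and merely remarks that ``essentially the same argument carries forward'' from the chromatic to the choice number---and your argument is exactly that intended standard one: pass to an \((H+1)\)-list-critical subgraph, use list-criticality to force minimum degree at least \(H\) (hence \(2m \geq Hn\) and \(n \geq H+1\)), combine with the Euler-formula bound \(m \leq 3n - 6 + 6g\), and derive the contradiction \((H-2)(H-3) \leq 12g < (H-2)(H-3)\) from the floor in the definition of \(H(g)\); all steps, including the requirement \(H \geq 7\) (which is where \(g \geq 1\) is used) and the final squaring of \(2H-5 > \sqrt{1+48g}\), check out.
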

Heawood proved that \(H(g)\), the so called \defining{Heawood number},
is an upper bound for \(\chr(G)\), and essentially the same
argument carries forward to prove that \(H(g)\) is an upper bound for
\(\ch(G)\) as well. Now, \Cref{T:Heawood} shows that
if \(G\) is \(H(g)\)-chromatic then it is also \(H(g)\)-list-chromatic,
so such graphs can never attain the maximum gap for \(S_{g}\).
The same is true if \(G\) is \((H(g)-1)\)-chromatic, by the following
result known as Dirac's map color theorem:
\begin{theorem}[Dirac~\cite{Dir52c}, 1952, B{\"o}hme--Mohar--Stiebitz~\cite{BohMohSti99}, 1999]\label{T:Dirac}
	If \(G\) is embeddable in \(S_{g}\) with \(\chr(G) = H(g)\) or \(\ch(G) = H(g)\), then
	\(K_{H(g)}\) is a subgraph of \(G\).
\end{theorem}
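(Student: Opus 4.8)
Because \(\chr(G) \leq \ch(G) \leq H(g)\) by \Cref{T:Heawood}, the hypothesis \(\chr(G) = H(g)\) already forces \(\ch(G) = H(g)\); so it suffices to prove the formally stronger list version, namely that \(\ch(G) = h\) implies \(K_{h} \subseteq G\), where I abbreviate \(h \defn H(g)\), and this simultaneously recovers Dirac's chromatic statement. The plan is to pass to a minimal obstruction and estimate its edges from two opposing sides. Since \(\ch(G) = h\), the graph \(G\) contains a subgraph \(H\) that is \defining{\(h\)-list-critical}: \(H\) is not \((h-1)\)-choosable but every proper subgraph is. I would first record that such an \(H\) is connected, simple, and satisfies \(\delta(H) \geq h-1\); indeed, a vertex \(v\) with \(\degree(v) \leq h-2\) could be deleted, the rest \((h-1)\)-list-colored by minimality, and a color for \(v\) then chosen from its list avoiding the at most \(h-2\) colors on its neighbors. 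As \(H \subseteq G\) is still embeddable in \(S_{g}\), it now suffices to prove \(H = K_{h}\). Writing \(n = \card{V(H)}\) and \(m = \card{E(H)}\), an \(h\)-list-critical graph has \(n \geq h\), and \(n = h\) together with \(\delta(H) \geq h-1\) forces \(H = K_{h}\); so I may assume \(H \neq K_{h}\) and aim for a contradiction from \(n \geq h+1\).

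Next I would set up the two edge estimates. Fixing a \(2\)-cell embedding of \(H\) (one of minimum genus \(g_{0} \leq g\)), Euler's formula together with \(3f \leq 2m\) (every face has degree at least \(3\), as \(H\) is simple and \(2\)-connected) gives \(2m \leq 6n + 12(g_{0}-1) \leq 6n + 12(g-1)\), exactly as in the derivation of \Cref{T:Heawood}. Using only \(\delta(H) \geq h-1\), i.e.\ \(2m \geq (h-1)n\), this reproves \((h-7)n \leq 12(g-1)\) but leaves \(n\) undetermined. The extra leverage must come from a \emph{Gallai-type surplus}: for a list-critical graph that is not complete, the subgraph induced by the vertices of degree exactly \(h-1\) is highly constrained—its blocks are cliques or odd cycles (a ``Gallai tree'')—and counting edges against this structure upgrades the degree bound to \(2m \geq (h-1)n + \varepsilon\) with a strictly positive surplus \(\varepsilon = \varepsilon(H)\).

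Finally I would close the gap. The defining inequalities of the Heawood number, \(h(h-7) \leq 12(g-1) < (h+1)(h-6)\), combined with \(2m \leq 6n + 12(g-1)\) and the surplus bound, squeeze \(n\) into a short interval just above \(h\), and for the generic range of \(g\) the surplus already contradicts \(n \geq h+1\). What survives are the arithmetically tight configurations, where the squeeze is too weak to exclude the few residual values \(n = h+1, h+2, \dotsc\); on the torus, for instance, \(h = 7\) makes the coefficient \(h-7\) vanish, collapsing the Euler bound to the statement that \(H\) is a \(6\)-regular triangulation, and here no counting slack remains—one must argue directly that an \(h\)-list-critical triangulation on \(S_{g}\) distinct from \(K_{h}\) cannot exist.

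I expect the main obstacle to lie in two places. The genuinely new ingredient over Dirac's chromatic argument is establishing the Gallai-type surplus in the \emph{choosability} setting: the classical proof leans on Kempe-chain recoloring and Brooks' theorem, which must be replaced by their list analogues—degree-choosability and the Erd{\H o}s--Rubin--Taylor~\cite{ErdRubTay80} and Borodin characterization of the non-degree-choosable connected graphs as precisely the Gallai trees. Second, because \(H(g)\) carries a floor, the squeeze in the last step leaves only \(\bigoh{1}\) residual values of \(n\); matching the quantitative strength of \(\varepsilon\) to this gap, and disposing of the tight triangulation cases by genuine structural analysis rather than counting, is the delicate part of the argument.
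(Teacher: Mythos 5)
The paper itself offers no proof of this statement: it is imported from Dirac~\cite{Dir52c} (for the chromatic number) and B{\"o}hme--Mohar--Stiebitz~\cite{BohMohSti99} (for the choice number), and the surrounding text merely records this attribution. So your attempt can only be measured against those works—whose strategy you have, in fairness, correctly reconstructed. The reduction to the list version is valid (since \(\chr(G) \leq \ch(G) \leq H(g)\) by \cref{T:Heawood}, the hypothesis \(\chr(G) = H(g)\) forces \(\ch(G) = H(g)\)), and your frame—pass to a critical subgraph \(H\) with \(\delta(H) \geq H(g) - 1\), write \(n\) and \(m\) for its vertices and edges, and squeeze \(n\) between Euler's bound \(2m \leq 6n + 12(g-1)\) and a lower bound on the edges of critical graphs—is indeed the skeleton of both published proofs. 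But what you have written is a roadmap, not a proof: the two items you defer as ``obstacles'' are not loose ends to be tidied; they are the entire content of the theorem.

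Concretely, there are two gaps. First, the Gallai-type surplus \(2m \geq (h-1)n + \varepsilon\) for non-complete list-critical graphs (where \(h \defn H(g)\)) is asserted, not established, and even granting it at the classical strength of Dirac's bound (\(\varepsilon = h-3\)) the squeeze fails: a contradiction requires \((h-7)n + \varepsilon > 12(g-1)\), and when \(12(g-1)\) sits at the top of its range, just below \((h+1)(h-6)\), taking \(n = h+1\) demands \(\varepsilon > h\). So counting alone cannot eliminate the near-complete configurations—graphs on \(h+1\) vertices with minimum degree \(h-1\), i.e.\ \(K_{h+1}\) minus a matching. These must be killed structurally (they are complete multipartite graphs with parts of size at most two, hence chromatic-choosable by results going back to~\cite{ErdRubTay80}), which is how B{\"o}hme--Mohar--Stiebitz proceed; your proposal never makes this move. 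Second, on the torus \(h = 7\), the coefficient \(h - 7\) vanishes, and, as you note, the critical subgraph is forced to be a \(6\)-regular triangulation; proving that no such triangulation other than \(K_{7}\) has choice number \(7\) is a substantive theorem in its own right, requiring Altshuler's classification (\cref{T:Altshuler}) and bespoke coloring arguments comparable to \cref{T:Main} of this paper. That these tight cases are where the difficulty concentrates is confirmed by the history: the analogous tight case on the Klein bottle was left open by~\cite{BohMohSti99} and required the separate paper of Kr{\'a}l' and {\v S}krekovski~\cite{KraSkr06}. Until you supply a proven surplus bound for list-critical graphs, a structural treatment of the near-complete and extreme-genus configurations, and the torus analysis, the statement remains unproved.
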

Dirac proved this result for the chromatic number,
and it was later extended to the choice number by B{\"o}hme, Mohar and Stiebitz.

At the other end, consider the complete bipartite graph \(K_{n, n}\).
It is an easy exercise that \(K_{n, n}\) is \(k\)-choosable where
\(k \defn \floor{\log(n)} + 1\), and it is known that \(K_{m, n}\) is embeddable
in \(S_{g}\) for \(g \defn \ceil{(m - 2)(n - 2) / 4}\) and this is best possible (see~\cite{Rin65}).
Hence,
\[
	\jump(K_{n, n}) \leq \log(n) - 1 \leq \log\parens[\big]{2\sqrt{g + 1} + 2} - 1 \leq \bigoh[\big]{\log(g)} \leq \littleoh{\sqrt{g}}.
\]
Since any bipartite graph is a subgraph of a complete bipartite graph,
this shows that one does not expect bipartite graphs to attain the greatest
gap on a fixed surface.

This motivates the following definition:
\begin{definition}
	For each \(g \geq 0\), \(r \geq 1\), define
	\[
		\jump(g, r) \defn \max\Set{ \jump(G) \given G \text{ is connected and embeddable in } S_{g}, \chr(G) = r }
	\]
	whenever the set on the right is nonempty. If there is no connected graph embeddable in \(S_{g}\)
	having chromatic number \(r\), then define \(\jump(g, r) \defn 0\).
\end{definition}

In \cref{S:asymptotics}, we prove the following stronger result along the
same lines as the bipartite case:
\begin{theorem}\label{T:asymp2}
	\(\jump(g, r) = \littleoh{\sqrt{g}}\) when \(r = \littleoh[\big]{\sqrt{g}/\log(g)}\).
	That is, if for each \(\delta > 0\) we have \(r \leq \delta \sqrt{g}/\log(g)\) for all sufficiently large \(g\),
	then for every \(\epsilon > 0\), \(\jump(g, r) \leq \epsilon \sqrt{g}\) for all sufficiently large \(g\).
\end{theorem}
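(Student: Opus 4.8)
The plan is to establish the uniform bound $\ch(G) \le \bigoh{\chr(G)\log(g)}$ for every graph $G$ embeddable in $S_{g}$, which yields the theorem at once: writing $r = \chr(G)$, the hypothesis $r = \littleoh{\sqrt{g}/\log(g)}$ forces $\ch(G) = \bigoh{r\log(g)} = \littleoh{\sqrt{g}}$, so $\jump(G) = \ch(G) - r \le \ch(G) = \littleoh{\sqrt{g}}$. Quantitatively, once $\ch(G) \le C r\log(g)$ for an absolute constant $C$, given $\epsilon > 0$ one applies the hypothesis with $\delta = \epsilon/C$ to conclude $\jump(g, r) \le \epsilon\sqrt{g}$ for all large $g$. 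We may assume $G$ is simple, as suppressing loops and parallel edges changes neither $\ch$, nor $\chr$, nor embeddability. The essential difficulty is that, for fixed $g$, the number of vertices $\numv$ is unbounded, so the vertex-count-sensitive estimate below cannot be fed $G$ directly.

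First I would record the random-coloring estimate: \emph{if $H$ is $r$-colorable on $N$ vertices and $k > r\ln(N)$, then $H$ is $k$-choosable.} Fix a proper coloring of $H$ into independent classes $V_{1}, \dotsc, V_{s}$ with $s \le r$, and any list assignment with $\card{L(v)} \ge k$. Assign to each color $c$ an independent uniform random class $\sigma(c) \in \{1, \dotsc, s\}$, and try to color every $v \in V_{i}$ with some $c \in L(v)$ for which $\sigma(c) = i$. Any coloring so obtained is proper, since adjacent vertices lie in distinct classes and are hence given colors with distinct $\sigma$-values. A vertex $v \in V_{i}$ fails only when none of its at least $k$ available colors is assigned class $i$, an event of probability at most $(1 - 1/s)^{k} \le e^{-k/s} \le e^{-k/r}$; a union bound over the $N$ vertices bounds the total failure probability by $N e^{-k/r} < 1$ whenever $k > r\ln(N)$, so a proper list coloring exists.

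Next I would pass to the $k$-core. For the value of $k$ chosen below, let $H$ be obtained from $G$ by repeatedly deleting vertices of degree less than $k$; it suffices to show $H$ is $k$-choosable, since the deleted vertices can then be colored greedily in reverse order of deletion, each having fewer than $k$ already-colored neighbors and a list of size $k$. Now $H$ embeds in $S_{g}$ with $\delta(H) \ge k$, so $k\,\card{V(H)} \le 2\,\card{E(H)} \le 6\,\card{V(H)} + 12(g - 1)$ by Euler's formula (the count in \cref{S:Introduction}); hence $\card{V(H)} \le 12(g - 1)/(k - 6) \le 12g$ once $k > 6$, while $\chr(H) \le \chr(G) = r$.

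Finally I would balance the two estimates by taking $k = \ceil{r\ln(12g)} + 1$. For all large $g$ this satisfies $k > 6$, so the core $H$ has $N \defn \card{V(H)} \le 12g$ and is $r$-colorable; since $k > r\ln(12g) \ge r\ln(N)$, the random-coloring estimate shows $H$ is $k$-choosable, whence $G$ is $k$-choosable and $\ch(G) \le k = \bigoh{r\log(g)}$, as required. The sole genuine obstacle is the unbounded vertex count, and the $k$-core reduction is exactly what removes it: it replaces $\numv$ by the bound $\card{V(H)} = \bigoh{g/k}$, after which the logarithm in the random-coloring estimate depends only on $g$ rather than on $G$.
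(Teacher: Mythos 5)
Your proof is correct, and at the structural level it follows the same two-step route as the paper: first a core reduction combined with Euler's formula to bound the number of vertices of the graph, and then a bound of the form \(\ch(G) \leq \bigoh{r\log(\text{number of vertices})}\) for \(r\)-chromatic graphs, balanced so that the logarithm depends only on \(g\). The genuine difference is in how the second ingredient is obtained. The paper notes that an \(r\)-chromatic graph on \(m\) vertices is a subgraph of \(K_{m * r}\) and cites Alon's theorem (\cref{T:Alon-multipartite}) as a black box to get \(\ch(G) \leq c_{2} r \log(m)\); you instead prove the required estimate from scratch---an \(r\)-colorable graph on \(N\) vertices is \(k\)-choosable once \(k > r\ln(N)\)---by randomly assigning each color to a color class and taking a union bound, which is precisely the probabilistic argument underlying Alon's upper bound for \(\ch(K_{m*r})\). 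Your route is thus self-contained (modulo Euler's formula and the folklore core reduction, which you re-prove greedily rather than invoking \cref{T:core}), at the cost of a page of probability that the paper replaces by a citation. The remaining discrepancy is in the choice of core parameter: the paper takes the \((\epsilon\sqrt{g})\)-core, yielding the sharper vertex bound \(m \leq 24\sqrt{g}/\epsilon\), whereas your \(k\)-core with \(k = \ceil{r\ln(12g)} + 1\) yields only \(N \leq 12g\); this is immaterial, since the vertex count enters both arguments only through its logarithm, and your constant bookkeeping (including the check that \(k > 6\) uniformly in \(r \geq 1\) for large \(g\), and the final substitution \(\delta = \epsilon/C\)) is sound.
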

 
The rest of the paper is organised as follows. In \cref{S:preliminaries},
we state some preliminary results that will be used in our proofs later on.
In \cref{S:jump-triangulation}, we prove \cref{T:Main}.
In \cref{S:asymptotics}, we prove \cref{T:asymp1,T:asymp2}.
In \cref{S:conclusion}, we mention some partial results
towards computing \(\jump(1)\), that is, the maximum gap \(\ch(G) - \chr(G)\) for toroidal
\(G\). We also generalise the definitions of \(\jump(g)\) and \(\jump(g, r)\),
as well as the results proved in \cref{S:asymptotics},
to graphs embeddable on nonorientable surfaces.
We conclude the paper with some open questions for further investigation.

\section{Preliminaries}\label{S:preliminaries}

We begin with the following result on the embeddability of
the complete graph \(K_{r}\) into an orientable surface:
\begin{theorem}[Ringel--Youngs~\cite{RinYou68}, 1968]\label{T:Ringel-Youngs}
	For every \(r \geq 1\), \(K_{r}\) is embeddable in \(S_{\OGenus(r)}\) for \(\OGenus(r) = \ceil{(r - 3) (r - 4) / 12}\),
	and this is best possible.
\end{theorem}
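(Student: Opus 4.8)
The plan is to establish that the least \(g\) for which \(K_{r}\) embeds in \(S_{g}\) is exactly \(\OGenus(r)\), by proving a matching lower and upper bound; the lower bound yields the ``best possible'' assertion, and the upper bound yields embeddability in \(S_{\OGenus(r)}\). The lower bound is routine and mirrors the Euler-formula computation of the introduction. Taking a minimum-genus embedding of \(K_{r}\), which may be assumed cellular so that Euler's formula \(\numv - \nume + \numf = 2 - 2g\) holds with equality, and using \(\numv = r\), \(\nume = \binom{r}{2}\) together with \(3\numf \leq 2\nume\) (valid since \(K_{r}\) is simple with \(r \geq 3\)), I would substitute \(\numf \leq 2\nume/3\) to obtain
\[
	g \;\geq\; \frac{\nume}{6} - \frac{\numv}{2} + 1 \;=\; \frac{(r-3)(r-4)}{12}.
\]
Since \(g\) is an integer this forces \(g \geq \OGenus(r)\), completing the easy half.

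The upper bound---exhibiting an actual embedding of \(K_{r}\) in \(S_{\OGenus(r)}\)---carries the entire weight of the theorem and is the main obstacle. Equality in the displayed bound forces \(3\numf = 2\nume\), that is, a \emph{triangular} embedding in which every face is a triangle; such an embedding can exist only when \((r-3)(r-4) \equiv 0 \pmod{12}\), which one checks happens exactly for \(r \equiv 0, 3, 4, 7 \pmod{12}\). For the remaining eight residues the target \(\OGenus(r)\) is a strict rounding-up, so one must instead build a \emph{near-triangular} embedding carrying a small, controlled number of larger faces. I would encode candidate embeddings combinatorially through \emph{rotation systems}: a cyclic ordering of the edges incident to each vertex determines an embedding into a definite orientable surface, whose faces---and hence whose genus, via Euler---are recovered by the standard face-tracing procedure. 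The problem then becomes purely combinatorial: produce, for each \(r\), a rotation system on \(K_{r}\) whose traced faces are all, or nearly all, triangles.

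The engine for manufacturing such rotation systems is the method of \emph{current graphs}. Here one works with a much smaller graph whose edges are labelled by ``currents'' taken from a suitable finite abelian group (most often a cyclic group of the appropriate order), subject to a Kirchhoff-type conservation law at each vertex; the faces of this small graph, acted on by the group, then generate the rotation scheme of the large graph \(K_{r}\), and the conservation law is precisely what guarantees that the generated faces are triangles. The genuine difficulty, and the reason the complete proof is long and was assembled piecemeal over many years, is that the construction of a workable current graph depends delicately on the value of \(r \bmod 12\); the argument therefore splinters into twelve residue cases, several of which require ad hoc devices such as vortices or extra adjacencies to absorb the leftover non-triangular faces. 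I would treat the four ``divisible'' residues first as the clean triangular model, then dispatch the eight exceptional residues by adjoining a bounded number of extra vertices or faces and checking case by case that the genus still lands exactly on \(\OGenus(r)\). Carrying out all twelve cases in full detail is precisely the content of Ringel's \emph{Map Color Theorem}, and I would not expect to shortcut that case analysis here.
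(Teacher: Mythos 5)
You should note first that the paper does not prove this theorem at all: it is imported verbatim as a classical citation (Ringel--Youngs, 1968), so there is no in-paper argument to compare yours against. Judged on its own terms, your proposal splits into two halves of very different status. The lower bound is correct and complete: with \(\numv = r\), \(\nume = \binom{r}{2}\) and \(3\numf \leq 2\nume\) (valid for \(r \geq 3\)), the Euler inequality \(\numv - \nume + \numf \geq 2 - 2g\) gives \(g \geq (r-3)(r-4)/12\), and integrality forces \(g \geq \OGenus(r)\). Two small remarks: you do not actually need to invoke cellularity of a minimum-genus embedding, since the inequality form of Euler's formula (the form the paper states) already suffices; and the claim should really be restricted to \(r \geq 3\), since the formula read literally fails for \(r = 1, 2\) --- a blemish in the theorem's statement itself, not in your argument.

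The upper bound, however, is where the entire content of the theorem lives, and your proposal does not prove it. What you give --- rotation systems, current graphs with a Kirchhoff-type conservation law, the split into twelve residue classes modulo \(12\), vortices and ad hoc devices for the residues where \((r-3)(r-4) \not\equiv 0 \pmod{12}\) --- is an accurate description of how Ringel, Youngs and their collaborators actually carried this out, but it is a roadmap rather than an argument: no current graph is constructed, no face-tracing is verified, and you state explicitly that you would defer the case analysis to the Map Color Theorem. As a standalone proof this is a genuine gap, and it is the hard direction that is missing. In fairness, it is an unavoidable gap --- the complete construction fills a book, and the paper itself makes exactly the same move by citing the result rather than proving it --- so your write-up is best read as a correct proof of the ``best possible'' half together with an honest and accurate attribution of the embeddability half to the literature.
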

The above result, called the Ringel--Youngs theorem, also shows that
\(K_{H(g)}\) is embeddable in \(S_{g}\) for all \(g \geq 1\),
so it implies that Heawood's upper bound is tight for all \(g \geq 1\).

\begin{definition}
	Let \(G = (V, E)\) be a simple graph, and \(k \geq 1\). The \defining{\(k\)-core} of \(G\)
	is the unique maximal subgraph of \(G\) having minimum degree
	at least \(k\).
\end{definition}

The next result is folklore; it was observed for \(k = 2\) by Erd{\H o}s, Rubin and Taylor~\cite{ErdRubTay80},
but we include the proof for general \(k\) for the sake of completeness.
\begin{proposition}\label{T:core}
	Let \(G = (V, E)\) be a simple graph, and \(k \geq 1\). Then,
	\(G\) is \(k\)-choosable if and only if its \(k\)-core is \(k\)-choosable.
\end{proposition}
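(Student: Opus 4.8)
The plan is to prove both directions of the equivalence, with the forward direction being essentially trivial and the reverse direction being the substantive content.

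\medskip

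First I would dispense with the easy direction. Let $H$ denote the $k$-core of $G$. Since $H$ is a subgraph of $G$, any list assignment $\ListAsgn{L}$ on $H$ can be extended (or simply restricted from) a list assignment on $G$, and a proper list coloring of $G$ restricts to a proper list coloring of $H$. Hence if $G$ is $k$-choosable, then so is its $k$-core $H$. This handles the ``only if'' direction in a sentence.

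\medskip

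The reverse direction is where the real work lies. Suppose the $k$-core $H$ is $k$-choosable; I want to show $G$ is $k$-choosable. The key structural fact is that $G$ is obtained from $H$ by repeatedly deleting vertices of degree less than $k$: the $k$-core is exactly what remains after iteratively stripping away all vertices whose current degree is below $k$, since the maximal subgraph of minimum degree at least $k$ is the fixed point of this peeling process. So I would fix an ordering $v_1, v_2, \dotsc, v_m$ of the vertices of $V(G) \setminus V(H)$ in the \emph{reverse} order of deletion—that is, $v_1$ is the last vertex deleted, $v_m$ the first—so that when we delete vertices in the order $v_m, v_{m-1}, \dotsc, v_1$, each $v_i$ has at most $k-1$ neighbors among $\Set{v_1, \dotsc, v_{i-1}} \cup V(H)$ at the moment it is deleted. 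Equivalently, in the coloring order $v_1, \dotsc, v_m$ each $v_i$ has at most $k-1$ neighbors that are either in $H$ or precede it.

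\medskip

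Given any list assignment $\ListAsgn{L}$ on $G$ with $\card{\List{L}{v}} \leq k$ for all $v$, I would first use the $k$-choosability of $H$ to properly color $V(H)$ from the lists $\List{L}{v}$. Then I would color the remaining vertices one at a time in the order $v_1, v_2, \dotsc, v_m$. When coloring $v_i$, the only constraints come from its already-colored neighbors, which are precisely those neighbors lying in $V(H)$ or among $\Set{v_1, \dotsc, v_{i-1}}$; by construction there are at most $k-1$ of these, so they forbid at most $k-1$ colors, leaving at least one admissible color in the list $\List{L}{v_i}$ of size at least $k$. This greedy step succeeds at every stage, yielding a proper $\ListAsgn{L}$-coloring of all of $G$. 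The main obstacle—really the only point requiring care—is justifying that such a peeling order exists, i.e.\ that removing all vertices not in the $k$-core can be realized as a sequence of deletions each of a vertex of degree at most $k-1$; this follows because any subgraph properly containing the $k$-core must, by maximality, contain a vertex of degree less than $k$, which can then be peeled off without affecting the $k$-core.
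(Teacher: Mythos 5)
Your proof is correct, and it takes a genuinely different route from the paper's. The paper argues by contradiction via a minimal counterexample: assuming the $k$-core is $k$-choosable but $G$ is not, it takes a subgraph $H \leq G$ of least order that is not $k$-choosable, shows that $H$ must have minimum degree at least $k$ (a vertex $v$ of degree at most $k-1$ could be greedily colored after list-coloring $H - v$, which is $k$-choosable by minimality), and concludes that $H$ lies inside the $k$-core, a contradiction. You instead argue directly and constructively: you realize $V(G) \setminus V(\text{core})$ as a reverse peeling order in which each vertex has at most $k-1$ neighbors among the core and its predecessors, color the core by hypothesis, and extend greedily. The combinatorial heart is identical in both proofs—a vertex with at most $k-1$ colored neighbors always retains a free color in its $k$-list—but the architectures differ, and each buys something. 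The paper's version needs only the defining maximality property of the core (every subgraph of minimum degree at least $k$ is contained in it) and never has to discuss the peeling process; your version must additionally justify that iterated deletion of low-degree vertices terminates exactly at the core, which you sketch, and in exchange is algorithmic, yielding an explicit coloring procedure rather than a pure existence argument. One point you leave implicit and should state: the low-degree vertex available at each peeling step can never be a core vertex, because every vertex of the core has degree at least $k$ within the core and hence within any subgraph of $G$ containing it; this is what guarantees the peeling strips exactly the vertices outside the core and never touches it.
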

\begin{proof}
	If \(G\) is \(k\)-choosable, then so is every subgraph. In particular,
	its \(k\)-core is \(k\)-choosable. This proves the forward direction.
	For the reverse direction, suppose for the sake of contradiction
	that the \(k\)-core of \(G\) is \(k\)-choosable, but \(G\) is not \(k\)-choosable.
	Let \(H \leq G\) be a subgraph of \(G\) with the least order such that
	\(H\) is not \(k\)-choosable. Note that \(H\) must be nonempty.
	
	We show that the minimum degree of \(H\) must be at least \(k\). Suppose
	for the sake of contradiction that there exists
	a vertex \(v \in V(H)\) such that \(\degree(v) < k\).
	Let \(\ListAsgn{L}\) be a list assignment on \(G\) with lists of size \(k\)
	for which \(H\) is not \(\ListAsgn{L}\)-choosable.
	The subgraph \(H - v\) is \(k\)-choosable by the minimality of \(H\) with respect
	to the number of vertices, so properly color \(H - v\) using the list assignment \(\ListAsgn{L}\).
	Now, observe that the list \(\List{L}{v}\) on the vertex \(v\) has lost at most \(k - 1\)
	colors due to the coloring of its neighbors, hence the coloring on \(H - v\) extends to
	a proper coloring on \(H\). Hence, \(H\) is \(\ListAsgn{L}\)-choosable, a contradiction.

	Thus, the minimum degree of \(H\) is at least \(k\), and so \(H\) is a subgraph
	of the \(k\)-core of \(G\). But the \(k\)-core of \(G\) is \(k\)-choosable by assumption,
	so \(H\) is also \(k\)-choosable, a contradiction. This completes the proof.
\end{proof}

The next result, called Brooks's theorem, gives a useful bound on
the chromatic and choice numbers of a graph in terms of its maximum degree.

\begin{theorem}[Brooks~\cite{Bro41}, 1941, Vizing~\cite{Viz76}, 1976, Erd{\H o}s--Rubin--Taylor~\cite{ErdRubTay80}, 1979]\label{T:Brooks}
	Let \(G\) be a connected simple graph with maximum degree \(\Delta\).
	Then, \(G\) is \(\Delta\)-colorable (resp.\ \(\Delta\)-choosable), unless \(G\) is an odd cycle
	or the complete graph on \(\Delta + 1\) vertices, in which
	cases \(G\) is \((\Delta + 1)\)-chromatic (resp.\ (\(\Delta + 1\))-list-chromatic).
\end{theorem}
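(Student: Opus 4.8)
The plan is to establish the list-coloring (choosability) statement; the ordinary coloring statement then follows immediately, since a proper coloring with \(\Delta\) colors is exactly an \(\ListAsgn{L}\)-coloring for the assignment with all lists equal to \(\{1, \dots, \Delta\}\). Fix a list assignment \(\ListAsgn{L}\) with \(\card{\List{L}{v}} = \Delta\) for every vertex \(v\) (lists of size \(< \Delta\) only make the task easier). The two exceptional cases are disposed of directly: an odd cycle is not \(2\)-choosable and \(K_{\Delta+1}\) is not \(\Delta\)-choosable, while both are \((\Delta+1)\)-colorable. It therefore remains to show that a connected graph \(G\) with maximum degree \(\Delta\) that is neither an odd cycle nor \(K_{\Delta+1}\) is \(\Delta\)-choosable; since connected graphs with \(\Delta \le 2\) are paths or cycles and are handled by inspection, I may assume \(\Delta \ge 3\). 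The one tool I will use repeatedly is the \emph{greedy argument}: if the vertices can be ordered \(v_1, \dots, v_n\) so that each \(v_i\) has strictly fewer than \(\card{\List{L}{v_i}}\) neighbors among \(v_1, \dots, v_{i-1}\), then coloring in this order always leaves a permitted color available, so \(G\) is \(\ListAsgn{L}\)-choosable.

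First I would dispose of the case where \(G\) is \emph{not regular}. Choose a vertex \(u\) with \(\deg(u) < \Delta\), grow a breadth-first spanning tree rooted at \(u\), and order the vertices so that each non-root vertex precedes its parent and \(u\) comes last. Every \(v_i\) with \(i < n\) then has its parent as a later neighbor, hence at most \(\Delta - 1\) earlier neighbors, and the last vertex \(u\) has \(\deg(u) < \Delta\) neighbors in total; the greedy argument applies.

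The substantial case is that \(G\) is \(\Delta\)-regular. Here I would first secure a \emph{Brooks configuration}: a vertex \(v\) together with two non-adjacent neighbors \(x_1, x_2\) such that \(G - \{x_1, x_2\}\) is connected. Granting this, order \(x_1, x_2\) first, then the remaining vertices in reverse breadth-first order from \(v\) inside the connected graph \(G - \{x_1, x_2\}\), with \(v\) last, and color as follows. If \(\List{L}{x_1} \cap \List{L}{x_2} \neq \emptyset\), give \(x_1\) and \(x_2\) a common color. Otherwise the lists are disjoint, so they cannot both be contained in the \(\Delta\)-element set \(\List{L}{v}\) (their union has \(2\Delta\) elements); say \(\List{L}{x_1} \not\subseteq \List{L}{v}\), and give \(x_1\) a color from \(\List{L}{x_1} \setminus \List{L}{v}\) and \(x_2\) any color. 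Now color greedily along the rest of the order: each internal vertex has its breadth-first parent toward \(v\) as a later neighbor, hence at most \(\Delta - 1\) earlier neighbors; and when \(v\) is reached, either its neighbors \(x_1, x_2\) share a color or \(x_1\)'s color lies outside \(\List{L}{v}\), so at most \(\Delta - 1\) of the \(\Delta\) colors in \(\List{L}{v}\) are blocked and \(v\) can be colored.

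The crux, which I expect to be the main obstacle, is producing a Brooks configuration in an arbitrary \(\Delta\)-regular (\(\Delta \ge 3\)) connected graph that is not complete. I would split on connectivity. If \(G\) is \(3\)-connected, then, not being complete, it contains two vertices at distance exactly \(2\); these furnish non-adjacent \(x_1, x_2\) with a common neighbor \(v\), and \(G - \{x_1, x_2\}\) remains connected because \(G\) is \(3\)-connected. If \(G\) has a cut vertex, I would pass to an end-block \(B\): regularity forces \(B\) to be \(2\)-connected and distinct from \(K_{\Delta+1}\) (its cut vertex already has degree \(\Delta\) inside \(B\) when \(B\) is complete, leaving no room for its edges outside \(B\)), and I would locate \(v, x_1, x_2\) inside \(B\), where deleting non-cut vertices of \(G\) cannot disconnect the whole graph. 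The fiddliest subcase is when \(G\) is \(2\)-connected but not \(3\)-connected: fixing a \(2\)-cut, I would choose \(v\) and its two neighbors on appropriate sides and invoke \(2\)-connectivity to verify that \(G - \{x_1, x_2\}\) stays connected. Once the configuration is available in every subcase, the coloring scheme above finishes the proof.
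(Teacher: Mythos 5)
First, a point of reference: the paper does not prove this theorem at all --- it is stated in \cref{S:preliminaries} as a black-box citation (Brooks for the chromatic statement, Vizing and Erd\H{o}s--Rubin--Taylor for the list extension) --- so your attempt can only be measured against the standard proofs in the literature. Judged that way, your architecture is the classical Lov\'asz greedy-ordering proof, and you handle correctly the one point where the list version genuinely differs from the chromatic version: when $L_{x_1} \cap L_{x_2} = \emptyset$ you cannot give $x_1, x_2$ a common color, but then $\lvert L_{x_1} \cup L_{x_2}\rvert = 2\Delta > \Delta = \lvert L_{v}\rvert$ forces, say, $L_{x_1} \not\subseteq L_{v}$, and coloring $x_1$ outside $L_{v}$ saves a color at $v$ just as well. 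The non-regular case and the $3$-connected case are complete and correct.

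The genuine gap is in producing the Brooks configuration when $G$ is regular but not $3$-connected, and it occurs twice. (i) In the cut-vertex case, your justification rests on the assertion that deleting non-cut vertices of $G$ cannot disconnect the whole graph. That is false: in any $2$-connected graph that is not $3$-connected, \emph{every} vertex is a non-cut vertex, yet some pair of them is a separating pair (a $3$-regular example: two copies of $K_4$ minus an edge, joined by a matching on the degree-$2$ vertices; deleting the two matched vertices of one copy disconnects it). What is true, and what you actually need, is that for $x_1, x_2$ in the end-block $B$ and distinct from its cut vertex $c$, the graph $G - \{x_1, x_2\}$ is connected if and only if $B - \{x_1, x_2\}$ is connected; so the cut-vertex case reduces to the $2$-connected case run inside $B$, with the extra constraint that $x_1, x_2$ avoid $c$ --- the difficulty recurs rather than disappears. (ii) In the $2$-connected, non-$3$-connected case, ``choose $v$ and its two neighbors on appropriate sides of a $2$-cut'' is not yet an argument: chosen naively, $\{x_1, x_2\}$ may itself be a separating pair. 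The standard repair (Lov\'asz) is to take $v$ to be a vertex lying in some $2$-cut, so that $G - v$ has a cut vertex, and to take $x_1, x_2$ to be neighbors of $v$ that are non-cut vertices in two \emph{distinct end-blocks} of $G - v$ (such neighbors exist because $G$ is $2$-connected); then $x_1, x_2$ are non-adjacent since they lie in distinct blocks of $G - v$, the block tree of $G - v$ shows $(G - v) - \{x_1, x_2\}$ is connected, and $\Delta \geq 3$ gives $v$ a surviving neighbor, so $G - \{x_1, x_2\}$ is connected. With this lemma inserted (and rerun inside $B$, avoiding $c$, for case (i)), your coloring scheme does complete the proof; alternatively, Erd\H{o}s--Rubin--Taylor's own proof of the list version handles cut vertices by a block induction via degree-choosability (Gallai trees), avoiding the configuration hunt there entirely.
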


Brooks proved his result for the chromatic number, and this was later extended
to the choice number independently by Vizing and by Erd{\H o}s, Rubin and Taylor.

Another useful result in computing the choice number of graphs
is as follows.
\begin{definition}
	For a directed graph \(G\), a subgraph \(H\) is
	\defining{Eulerian} if \(\indegree_{H}(v) = \outdegree_{H}(v)\)
	for all \(v \in V(H)\). An \defining{even} (resp.\ \defining{odd})
	Eulerian subgraph is one with an even (resp.\ odd) number of edges.
\end{definition}
Note that the subgraphs are not assumed to be connected in the above definition.

\begin{theorem}[Alon--Tarsi~\cite{AloTar92}, 1992]\label{T:Alon--Tarsi}
	Suppose the edges of a graph \(G\) can be oriented such that
	the number of even Eulerian subgraphs differs from the number of
	odd Eulerian subgraphs. Let \(\ListAsgn{L}\) be a list assignment
	such that \(\card{L_{v}} \geq \outdegree(v) + 1\) for all \(v \in V(G)\).
	Then, \(G\) is \(\ListAsgn{L}\)-choosable.
\end{theorem}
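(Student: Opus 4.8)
The plan is to prove this by the polynomial method, via the \emph{graph polynomial} of \(G\). Fix an orientation \(D\) of \(G\) witnessing the hypothesis and abbreviate \(d^{+}(v) \defn \outdegree_{D}(v)\), so that the number of even Eulerian subgraphs of \(D\) differs from the number of odd ones and the list condition reads \(\card{\List{L}{v}} \geq d^{+}(v) + 1\). Labelling the vertices \(V = \Set{1, \dotsc, n}\), I would introduce the polynomial
\[
	P_{D}(x_{1}, \dotsc, x_{n}) \defn \prod_{\diredge{u}{v} \in D} (x_{u} - x_{v}),
\]
with integer coefficients, the product ranging over the arcs of \(D\). Its key feature is that an assignment \(x_{v} \mapsto c_{v}\) satisfies \(P_{D}(c_{1}, \dotsc, c_{n}) \neq 0\) if and only if \(c_{u} \neq c_{v}\) across every edge, that is, if and only if \(c\) is a proper coloring of \(G\). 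Hence it suffices to produce colors \(c_{v} \in \List{L}{v}\) at which \(P_{D}\) does not vanish.

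The heart of the argument is to compute the coefficient of the monomial \(m \defn \prod_{v \in V} x_{v}^{d^{+}(v)}\) in \(P_{D}\). Expanding the product, each factor \((x_{u} - x_{v})\) arising from an arc \(\diredge{u}{v}\) contributes either \(+x_{u}\) (``selecting the tail'') or \(-x_{v}\) (``selecting the head''); selecting the tail at every arc yields \(m\) with coefficient \(+1\). For a general selection, let \(S\) be the set of arcs at which the head was chosen. A short bookkeeping shows that the exponent of \(x_{w}\) in the resulting monomial equals \(d^{+}(w) - \outdegree_{S}(w) + \indegree_{S}(w)\), so the selection produces \(m\) exactly when \(\outdegree_{S}(w) = \indegree_{S}(w)\) at every vertex \(w\) --- that is, exactly when \(S\) is an Eulerian subgraph of \(D\) --- and in that case the selection carries the sign \((-1)^{\card{S}}\). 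Summing over all Eulerian subgraphs, the coefficient of \(m\) in \(P_{D}\) is
\[
	\sum_{S \text{ Eulerian}} (-1)^{\card{S}} = (\text{\# even Eulerian subgraphs}) - (\text{\# odd Eulerian subgraphs}),
\]
which is nonzero by hypothesis. I expect this step --- setting up the correspondence between monomial selections yielding \(m\) and Eulerian subgraphs of \(D\), and tracking the sign correctly --- to be the main obstacle.

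To finish, I would invoke the Combinatorial Nullstellensatz over the rationals. The polynomial \(P_{D}\) has total degree equal to the number of arcs, namely \(\sum_{v} d^{+}(v)\), which is precisely the degree of \(m\); its coefficient of \(m\) is nonzero by the previous step; and the lists satisfy \(\card{\List{L}{v}} \geq d^{+}(v) + 1 > d^{+}(v)\). The Combinatorial Nullstellensatz then guarantees colors \(c_{v} \in \List{L}{v}\) with \(P_{D}(c_{1}, \dotsc, c_{n}) \neq 0\), which by the observation above is a proper coloring with \(\Color(v) = c_{v} \in \List{L}{v}\) for every \(v\). Thus \(G\) is \(\ListAsgn{L}\)-choosable, as required. (The original argument of Alon and Tarsi predates the Combinatorial Nullstellensatz and establishes the required nonvanishing directly; either route completes the proof.)
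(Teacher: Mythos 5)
Your proof is correct, but there is no in-paper proof to compare it against: the paper quotes this result as a theorem of Alon and Tarsi~\cite{AloTar92} and uses it purely as a black box, so any proof had to be reconstructed from scratch, which you have done accurately. Your route is the standard modern one. The key coefficient computation is right: in the expansion of \(\prod_{\diredge{u}{v}}(x_{u}-x_{v})\), a selection with head-set \(S\) contributes the exponent \(\outdegree(w) - \outdegree_{S}(w) + \indegree_{S}(w)\) to \(x_{w}\), so the selections producing \(\prod_{v} x_{v}^{\outdegree(v)}\) are exactly the Eulerian sub-digraphs \(S\), each with sign \((-1)^{\card{S}}\), whence the coefficient is the difference between the numbers of even and odd Eulerian subgraphs, nonzero by hypothesis (note the empty subgraph counts, so this difference is well defined and the hypothesis is nonvacuous). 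The appeal to the Combinatorial Nullstellensatz is also legitimate: the monomial has degree \(\sum_{v}\outdegree(v)\), equal to the total degree of the polynomial (the number of edges), and \(\card{\List{L}{v}} \geq \outdegree(v)+1\) gives the required strict inequality on list sizes, so suitable colors \(c_{v} \in \List{L}{v}\) with nonvanishing product exist and form a proper coloring. The one caveat is historical, and you flag it yourself: Alon and Tarsi's 1992 argument predates the Nullstellensatz and establishes the nonvanishing of the graph polynomial over the lists directly (by reducing modulo the polynomials \(\prod_{c \in \List{L}{v}}(x_{v}-c)\)); your packaging is the cleaner contemporary formulation, and either version fully justifies the statement as the paper uses it.
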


\begin{notation}
	For every \(m, r \geq 1\), denote by
	\(K_{m * r}\) the complete \(r\)-partite graph with \(m\) vertices in each part.
\end{notation}

We will need the following bound
on the growth of the choice number of \(K_{m * r}\):
\begin{theorem}[Alon~\cite{Alo92}, 1992]\label{T:Alon-multipartite}
	There exist two positive constants \(c_{1}\) and \(c_{2}\) such that,
	for every \(m \geq 2\) and for every \(r \geq 2\),
	\[
		c_{1} r \log(m) \leq \ch(K_{m * r}) \leq c_{2} r \log(m).
	\]
\end{theorem}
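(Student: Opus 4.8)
The plan is to prove the two inequalities separately by the probabilistic method, throughout viewing a proper coloring of \(K_{m * r}\) as a choice of pairwise disjoint color sets \(A_{1}, \dotsc, A_{r}\), one for each part, where \(A_{i}\) is required to meet the list \(L_{v}\) of every vertex \(v\) in the \(i\)-th part (a ``transversal'' of the lists in that part).

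For the upper bound I would argue as follows. Fix a list assignment with \(\card{L_{v}} = k\) for every vertex, let \(C\) be the union of all the lists, and dedicate each color \(c \in C\) to an independently and uniformly chosen part \(\pi(c) \in \Set{1, \dotsc, r}\). I then try to color each vertex \(v\) lying in part \(i\) by some color of \(L_{v}\) that has been dedicated to \(i\); since colors dedicated to distinct parts are disjoint, any such coloring is automatically proper. A fixed vertex fails only when none of its \(k\) colors is dedicated to its own part, which happens with probability \((1 - 1/r)^{k} \leq e^{-k/r}\). Taking a union bound over all \(mr\) vertices shows that a proper coloring exists as soon as \(m r \, e^{-k/r} < 1\), that is, as soon as \(k > r \log(mr)\), giving \(\ch(K_{m * r}) \leq \bigoh{r \log(mr)}\).

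For the lower bound I would exhibit, for a suitable \(k = \bigtheta{r \log(m)}\), a list assignment that is not choosable, using the same transversal picture in reverse. Work over a universe \(C\) of size \(rk\) and give all \(r\) parts the same family \(\mathcal{F}\) of \(m\) lists, chosen as \(m\) independent uniformly random \(k\)-subsets of \(C\). A short first-moment computation shows that if \(m\) is large enough (of order \(e^{k/r} \cdot k \log r\)), then with positive probability every \(k\)-subset \(T \subseteq C\) is \emph{disjoint} from some member of \(\mathcal{F}\); hence no \(k\)-element set is a transversal of \(\mathcal{F}\), so every transversal has size at least \(k + 1\). Since \(r\) pairwise disjoint transversals would then occupy more than \(r k = \card{C}\) colors, no proper coloring can exist, so \(K_{m * r}\) is not \(k\)-choosable. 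Inverting the relation between \(m\), \(r\), and \(k\) yields \(\ch(K_{m * r}) \geq \bigomega{r \log(m)}\).

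The main obstacle is matching the two bounds by a \emph{single} pair of constants valid for all \(m, r \geq 2\). Both arguments above are clean only when \(r\) is at most polynomial in \(m\): the union bound in the upper estimate loses a factor \(\log r\) (producing \(\log(mr)\) rather than \(\log m\)), and the first-moment condition in the lower estimate forces \(m\) to be large relative to \(r\). Reconciling them in the remaining regime \(r \gg m\)---where the naive estimates differ from the target by a factor of order \(m / \log(m)\)---is the delicate point, and I expect it to require a more careful alteration, or a preprocessing step bounding the effective number of colors competing at each vertex, rather than the bare random dedication and first-moment arguments.
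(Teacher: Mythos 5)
First, a point of reference: the paper does not prove this statement at all --- it is imported verbatim from Alon's paper \cite{Alo92} --- so your proposal has to be judged as a reconstruction of the cited result, and as such it has a genuine gap, one that you yourself flag but do not close. Each of your two arguments is sound in the regime where it applies, but together they prove strictly less than the theorem, which demands a \emph{single} pair of constants valid for all \(m, r \geq 2\). On the upper side, random dedication plus a union bound over all \(mr\) vertices gives \(\ch(K_{m * r}) = O\parens[\big]{r \log (mr)}\), and the extra \(\log r\) is not an artifact of loose analysis but of the method itself: on the instance where all \(mr\) lists equal one common \(k\)-set (an instance colorable from those lists as soon as \(k \geq r\), by giving each part its own color), the dedication scheme fails with high probability unless \(k = \Omega(r \log r)\), by the coupon-collector phenomenon. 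So for fixed \(m\) and \(r \to \infty\) --- where the theorem asserts \(\ch(K_{m*r}) = O(r)\), consistent with Kierstead's exact value \(\ch(K_{3 * r}) = \lceil (4r-1)/3 \rceil\) quoted in \cref{S:conclusion} --- your argument cannot give the claimed bound. On the lower side, your first-moment condition \(m \geq e^{k/r} k \log r\) (roughly), with \(k = \Theta(r \log m)\), forces \(r\) to be at most about \(m^{1-\epsilon}\); beyond that, your construction certifies nothing better than \(\ch \geq \chr = r\), which misses the target \(c_1 r \log m\) by exactly the factor \(\log m\). Thus both halves require \(r\) to be bounded in terms of \(m\), and the theorem remains unproven whenever \(r\) is large compared to \(m\).

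This is not a pedantic complaint about constants, because the missing regime is the one this paper actually uses: in the proof of \cref{T:asymp1}, the lower bound \(\ch(K_{m * r}) \geq c_{1} r \log m\) is invoked with \(m\) a \emph{fixed} constant (chosen so that \(c_{1} \log m \geq 2\)) and \(r \to \infty\), which is precisely where your construction is silent. (Your upper bound, by contrast, would suffice for the application in \cref{T:asymp2}, since there \(r = O(m)\) and hence \(\log(mr) = \Theta(\log m)\).) A further remark: even inside its working range, your lower-bound parameters are the most expensive possible. Demanding that every transversal have size at least \(k + 1\) over a universe of size \(rk\) makes the first-moment requirement on \(m\) scale with \(k \log r\); demanding only that transversals exceed \(N/r\) over a much smaller universe of size \(N = O(k)\) replaces this by roughly \((k/r)\log r\), which extends the range of validity from \(r\) polynomial in \(m\) to \(r\) exponential in a power of \(m\) --- but still not to all \(r\). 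Closing the remaining regime is the real content of Alon's theorem, and it is done in \cite{Alo92} by arguments that overcome exactly the two obstructions above (success is only required per part rather than per vertex, with the exceptional vertices repaired, and the lower-bound construction avoids a union bound over all small subsets of the color universe); any completion of your sketch would have to do the same.
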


If \(G = (V, E)\) is a \(k\)-regular triangulation on the torus \(S_{1}\),
then \(k \numv = 2 \nume\) and \(3\numf = 2\nume\), which together
with Euler's formula \(\numv - \nume + \numf = 0\) for \(S_{1}\)
gives \(k = 6\). So, any regular triangulation on the torus
is necessarily \(6\)-regular.
In 1973, Altshuler~\cite{Alt73} characterized the
\(6\)-regular triangulations on the torus as follows
(similar constructions can also be found in \cites{Neg83,Tho91}).
For integers \(r \geq 1\), \(s \geq 1\), and \(0 \leq t < s\),
let the graph \(G \defn T(r, s, t)\) have the vertex set
\(V(G) \defn \Set{ (i, j) \given 1 \leq i \leq r, 1 \leq j \leq s }\)
and the following edges:
\begin{itemize}
	\item For \(1 < i < r\), the vertex \((i, j)\) is adjacent to
	\((i, j \pm 1)\), \((i \pm 1, j)\) and
	\((i \pm 1, j \mp 1)\).
	
	\item If \(r > 1\), the vertex \((1, j)\) is adjacent to \((1, j \pm 1)\), \((2, j)\),
	\((2, j - 1)\), \((r, j + t + 1)\) and \((r, j + t)\). 
	
	\item If \(r > 1\), the vertex \((r, j)\) is adjacent to \((r, j \pm 1)\),
	\((r - 1, j + 1)\), \((r - 1, j)\), \((1, j - t)\) and \((1, j - t - 1)\).
	
	\item If \(r = 1\), the vertex \((1, j)\) is adjacent to \((1, j \pm 1)\), \((1, j \pm t)\)
	and \((1, j \pm (t + 1))\).
\end{itemize}
Here, addition in the second coordinate is taken modulo \(s\).

\(T(r, s, t)\) can be visualized by taking an \((r + 1) \times (s + 1)\) grid
graph, identifying the top and bottom rows in the usual manner,
identifying the leftmost and rightmost columns with a ``twist'' by \(t\) vertices,
and then triangulating each face in a regular manner.
\Cref{F:T(562)} shows the graph \(G = T(5, 6, 2)\).
The edges between the top and bottom rows are not shown in the figure.
Note that the graph in \cref{F:T(562)} happens to be \(3\)-chromatic.

\begin{theorem}[Altshuler~\cite{Alt73}, 1973]\label{T:Altshuler}
	Every \(6\)-regular triangulation on the torus is isomorphic
	to \(T(r, s, t)\) for some integers \(r \geq 1\), \(s \geq 1\), and \(0 \leq t < s\).
\end{theorem}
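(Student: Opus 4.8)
The plan is to realize every such $G$ as a quotient of the standard triangular lattice of the plane, exploiting the fact that a $6$-regular triangulation carries a canonical flat geometry. First I would give the torus a Euclidean metric by declaring each triangular face to be a unit equilateral triangle and gluing faces along shared edges by isometries. Since $G$ is $6$-regular, the angle sum around every vertex equals $6 \cdot 60^{\circ} = 360^{\circ}$, so the metric has no cone points and is genuinely flat. The torus thereby becomes a complete flat surface on which $G$ is a geodesic triangulation with every face an equilateral triangle.

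Next I would develop this flat torus onto the plane. Because it is flat and complete, its developing map is an isometry from the universal cover onto $\mathbf{R}^{2}$, carrying the lift $\widetilde{G}$ to a tiling of $\mathbf{R}^{2}$ by unit equilateral triangles in which every vertex has degree $6$. The rigidity step is to observe that the only such tiling is the standard triangular lattice $T$: once the six equilateral triangles around one vertex are placed, flatness and $6$-regularity force the placement of every successive triangle, and induction outward identifies the whole tiling with $T$. I would fix coordinates on $T$ so that its three edge directions are $e_{1} = (1,0)$, $e_{2} = (0,1)$ and $e_{1} - e_{2} = (1,-1)$, matching the horizontal, vertical and diagonal adjacencies in the definition of $T(r,s,t)$.

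I would then analyse the deck group. The fundamental group $\Gamma \cong \mathbf{Z}^{2}$ of the torus acts on $\widetilde{G} \cong T$ freely and properly discontinuously by combinatorial automorphisms, which under the developing map become Euclidean isometries preserving the lattice $T$. Since the torus is orientable, every deck transformation preserves orientation, and since the action is free, no nontrivial deck transformation has a fixed point; an orientation-preserving fixed-point-free isometry of $\mathbf{R}^{2}$ is a translation. Hence $\Gamma$ is identified with a rank-$2$ sublattice $\Lambda$ of the translation lattice $\langle e_{1}, e_{2} \rangle$ of $T$, of finite index $\numv$, and $G \cong T / \Lambda$ as triangulations of the torus.

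Finally I would classify $\Lambda$. Every finite-index subgroup of $\mathbf{Z}^{2} = \langle e_{1}, e_{2} \rangle$ has a basis in Hermite normal form, which I would take to be $\{\, r\,e_{1} + t\,e_{2},\ s\,e_{2} \,\}$ with $r, s \geq 1$ and $0 \leq t < s$; its index $rs$ equals $\numv$. Here the generator $s\,e_{2}$ makes the second coordinate periodic with period $s$ (this is exactly the ``addition modulo $s$'' in the second coordinate), while the generator $r\,e_{1} + t\,e_{2}$ makes the first coordinate periodic with period $r$ together with a shift of $t$ in the second coordinate, producing the twist. Reducing the six adjacencies of each vertex of $T$ modulo $\Lambda$ then reproduces verbatim the four incidence rules defining $T(r,s,t)$, so $G \cong T(r,s,t)$; conversely, each $T(r,s,t)$ is visibly the quotient of $T$ by such a $\Lambda$, hence a $6$-regular triangulation of the torus. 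I expect the main obstacle to be the rigidity step --- proving that a complete flat $6$-regular triangulation of the plane must be the triangular lattice --- together with the bookkeeping required to convert the Hermite normal form of $\Lambda$ into precisely the stated twist parameter and the four adjacency rules, handling the degenerate small cases (where $T(r,s,t)$ acquires loops or multiple edges) within the quotient framework.
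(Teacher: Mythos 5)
Your proposal is sound, but note that there is no proof in the paper to compare it against: the paper quotes Altshuler's classification from \cite{Alt73} as a known result and never proves it. Your argument is a genuinely different, geometric route from Altshuler's original one. Altshuler's proof is purely combinatorial: he follows ``straight-ahead'' paths in the map, shows that each such path closes up into a cycle, and that these cycles decompose the triangulation into \(s\) rows of \(r\) vertices reglued with a twist \(t\), which simultaneously gives the normal form and the enumeration his paper is about. Your proof instead makes each face a unit equilateral triangle (so \(6\)-regularity gives total angle \(2\pi\) at every vertex and the metric is flat), develops the universal cover isometrically onto \(\mathbf{R}^{2}\), identifies the lifted triangulation with the standard triangular lattice, notes that deck transformations are fixed-point-free orientation-preserving isometries preserving that lattice and hence are lattice translations, and reads off \((r,s,t)\) from the Hermite normal form of the resulting rank-\(2\) sublattice; the parameter ranges \(r, s \geq 1\), \(0 \leq t < s\) drop out exactly, and the quotient adjacencies reproduce the four incidence rules. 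What your approach buys is conceptual economy and uniform treatment of the degenerate cases --- loops and multiple edges just correspond to short vectors in the sublattice, and the angle count at a vertex is unaffected since a loop contributes two edge-ends and two face-corners --- at the cost of invoking standard machinery (completeness and developing maps for compact flat surfaces, the classification of plane isometries, Hermite normal form). What Altshuler's approach buys is a self-contained elementary argument that needs no geometry and directly supports counting the maps. The two genuine obligations you correctly flag --- rigidity of the edge-to-edge unit-equilateral tiling of the plane, and the translation of the normal-form basis into the stated adjacency rules --- are both routine and close as you describe, so I see no gap in the outline.
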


Note that \(T(r, s, t)\) can be isomorphic to \(T(r', s', t')\)
even if the tuples \((r, s, t)\) and \((r', s', t')\) are distinct.

\begin{figure}
	\begin{tikzpicture}[font=\scriptsize]
	\draw (1,1) grid (6,6);
	\foreach \y in {2,...,6}
		\foreach \x in {1,...,5}{
			\draw (\x,\y) -- (\x+1,\y-1);
	}
	\foreach \x in {1,...,6}
		\foreach \y in {1,...,6}{
		\pgfmathparse{Mod(\y-\x,3)==0?1:0}
		\ifnum\pgfmathresult>0
			\draw[fill=red] (\x,\y) circle(2pt);
		\else
			\pgfmathparse{Mod(\y-\x,3)==1?1:0}
			\ifnum\pgfmathresult>0
				\draw[fill=blue] (\x,\y) circle(2pt);
			\else
				\draw[fill=green] (\x,\y) circle(2pt);
			\fi
		\fi
		}
	\foreach \y in {1,...,6}{
		\node[anchor=east] at (1,\y) {(1,\y)};
		\pgfmathparse{Mod(\y-5,6)}
		\node[anchor=west] at (6,\pgfmathresult+1) {(1,\y)};
	}
	\foreach \x in {2,...,5}{
		\node[anchor=north] at (\x,1) {(\x,1)};
		\node[anchor=south] at (\x,6) {(\x,6)};
	}
	\end{tikzpicture}
	\caption{\(G = T(5,6,2)\)}\label{F:T(562)}
\end{figure}

\section{The gap for \texorpdfstring{\(6\)}{6}-regular triangulations on the torus}\label{S:jump-triangulation}

We start by observing that some of the graphs \(T(r, s, t)\) are not simple triangulations:
they may contain loops and multiple edges.

The graphs containing loops are precisely those isomorphic to
\(T(1, s, 0)\) for all \(s \geq 1\), so we do not consider them. 
The loopless graphs containing multiple edges are precisely
those isomorphic to
\(T(2, s, t)\) for \(t = 0, s - 2, s - 1\), \(s \geq 2\), and \(T(1, s, t)\)
for \(t = 1, \floor{(s - 1) / 2}\), \(s \geq 3\). One can check that these graphs
are never bipartite, so the chromatic number of any such graph is at least \(3\).
Moreover, after deleting the duplicate edges (since they do not make
a difference for the purpose of coloring) these graphs have maximum degree
\(\Delta \leq 5\). Hence, by \cref{T:Brooks},
any such graph is \(5\)-choosable and thus has gap at most \(2\),
unless it is isomorphic to \(K_{6}\); but in the latter case,
\cref{T:Brooks} says that the chromatic number and choice number are both equal to
\(6\), so the gap is \(0\). Hence, \(\jump\parens[\big]{T(r, s, t)} \leq 2\)
if \(T(r, s, t)\) contains multiple edges. 

This takes care of the trivial cases. Next, we examine the graphs
\(T(r, s, t)\) that are simple.
First, we establish some notations and definitions for use in the upcoming proofs:
\begin{notation}
\hfill
	\begin{itemize}
	\item A list \(\List{L}{v}\) is called a \defining{\(k\)-list} if \(\card{\List{L}{v}} = k\).
	A \defining{\(k\)-list assignment \(\ListAsgn{L}\)} on a graph \(G\) is a list assignment
	for which every list \(L_{v}\) is a \(k\)-list. Note that a graph \(G\)
	is \(k\)-choosable if and only if it is \(\ListAsgn{L}\)-choosable for every
	\(k\)-list assignment \(\ListAsgn{L}\).
	
	\item Let \(G\) be a graph with a list assignment \(\ListAsgn{L}\).
	For a subgraph \(G'\) of \(G\) and a color \(c \in \Nat\), denote by \(G'(c)\)
	the induced subgraph of \(G'\) on those vertices whose lists contain
	the color \(c\). We shall denote (maximal connected) components of \(G'(c)\)
	by \(\alpha\), \(\beta\), etc.
	
	\item Let \(P\) be a nonempty path or cycle graph. A vertex \(v \in V(P)\) is an \defining{end point} of \(P\)
	if \(\degree(v) = 1\), and it is an \defining{interior point} of \(P\) if \(\degree(v) = 2\).
	
	\item Let \(P'\) be a nonempty proper connected subgraph of \(P\) (so \(P'\) is itself a nonempty path graph).
	We denote by \(v(P')\) an end point of \(P'\), and by \(w(P')\) a vertex in
	\(P \setminus P'\) that is adjacent to \(v(P')\) (when it exists).
	\end{itemize}
\end{notation}

Next, we record the following simple observation:
\begin{observation}\label{O:main}
	Suppose \(P\) is a nonempty path or cycle graph,
	\(\ListAsgn{L}\) is a \(k\)-list assignment on \(P\) (\(k \geq 1\)), and \(c \in \Nat\) is any color
	such that \(P(c)\) is a nonempty proper subgraph of \(P\). Let \(\alpha\) be a component of \(P(c)\).
	Let \(v(\alpha)\) be an end point of \(\alpha\) for which \(w(\alpha)\) exists.
	Then, there exists a color \(d \in \List{L}{w(\alpha)} \setminus \List{L}{v(\alpha)}\), since
	\(c \not\in \List{L}{w(\alpha)}\) and \(\card{L_{v(\alpha)}} = \card{L_{w(\alpha)}} = k \geq 1\).
	In particular, there exists a component \(\beta\) of \(P(d)\) containing
	\(w(\alpha)\) but not \(v(\alpha)\).
\end{observation}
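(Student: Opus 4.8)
The plan is to verify the two assertions of the observation in sequence, each resting on the definition of a component of \(P(c)\) together with a one-line cardinality count on the lists. First I would determine where the color \(c\) lives. Since \(v(\alpha)\) is a vertex of the component \(\alpha\) of \(P(c)\), we have \(c \in \List{L}{v(\alpha)}\) by the definition of \(P(c)\). On the other hand, \(w(\alpha)\) is a vertex of \(P \setminus \alpha\) adjacent to \(v(\alpha)\); were \(c \in \List{L}{w(\alpha)}\), then \(w(\alpha)\) would itself lie in \(P(c)\) and, being adjacent to the vertex \(v(\alpha)\) of \(\alpha\), would have to lie in the same component \(\alpha\)---contradicting \(w(\alpha) \in P \setminus \alpha\). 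Hence \(c \in \List{L}{v(\alpha)} \setminus \List{L}{w(\alpha)}\).

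With this in hand, the existence of \(d\) is immediate from counting. Both \(\List{L}{v(\alpha)}\) and \(\List{L}{w(\alpha)}\) have exactly \(k\) elements, and \(c\) lies in the former but not the latter, so \(\card{\List{L}{v(\alpha)} \cap \List{L}{w(\alpha)}} \leq k - 1\); therefore \(\List{L}{w(\alpha)} \setminus \List{L}{v(\alpha)}\) is nonempty and any color \(d\) in it works. (Equivalently, if \(\List{L}{w(\alpha)}\) were a subset of \(\List{L}{v(\alpha)}\), then \(\List{L}{v(\alpha)}\) would contain \(\List{L}{w(\alpha)}\) together with the extra element \(c\), forcing \(\card{\List{L}{v(\alpha)}} \geq k + 1\).)

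Finally I would read off the statement about \(\beta\). Since \(d \in \List{L}{w(\alpha)}\), the vertex \(w(\alpha)\) belongs to \(P(d)\) and hence to some component \(\beta\) of \(P(d)\); since \(d \notin \List{L}{v(\alpha)}\), the vertex \(v(\alpha)\) is not in \(P(d)\) at all, so \(v(\alpha) \notin \beta\). Thus \(\beta\) contains \(w(\alpha)\) but not \(v(\alpha)\), as required. I do not expect any genuine difficulty here; the only subtle point is the claim \(c \notin \List{L}{w(\alpha)}\), which is precisely where the \emph{maximality} of \(\alpha\) as a connected component (rather than an arbitrary connected subgraph) of \(P(c)\) is used.
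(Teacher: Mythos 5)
Your proof is correct and follows exactly the same route as the paper's inline justification: $c\notin L_{w(\alpha)}$ because $w(\alpha)$ lies outside the (maximal) component $\alpha$ of $P(c)$, a cardinality count on the two $k$-lists then yields $d$, and membership/non-membership in $P(d)$ gives the component $\beta$. You have merely spelled out the details the paper leaves implicit, so there is nothing to add.
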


The following key lemma will allow us to color the alternate
vertices of a path---equipped with a list assignment---in
such a way that the remaining vertices
lose at most one color from each of their lists.

\begin{lemma}\label{L:KeyLemma}
	Let \(P = (V, E)\) be a path graph on at least \(2\) vertices,
	and \(\ListAsgn{L}\) a \(k\)-list assignment on \(P\), \(k \geq 1\).
	Let \(I_{1} \cup I_{2}\) be the unique partition of \(V(P)\)
	into two independent sets. Then, there exists a coloring of \(I_{1}\)
	such that every \(v \in I_{2}\) loses at most one color from its list.
\end{lemma}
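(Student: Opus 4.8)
The plan is to color $I_{1}$ greedily by sweeping along the path from one end to the other. First I would fix the linear order $v_{1}, v_{2}, \dots, v_{n}$ of the vertices of $P$, so that the unique bipartition $I_{1} \cup I_{2}$ is exactly the partition by parity of the index, and I would list the vertices of $I_{1}$ in this order as $u_{1}, u_{2}, \dots, u_{m}$. Between two consecutive members $u_{i}, u_{i+1}$ of $I_{1}$ lies exactly one vertex of $I_{2}$, which I denote $w_{i}$; conversely every interior vertex of $I_{2}$ is some $w_{i}$, while an $I_{2}$-vertex that is an end point of $P$ has a single neighbor, which lies in $I_{1}$. Since each $v \in I_{2}$ satisfies $\degree(v) \leq 2$ with all its neighbors in $I_{1}$, the condition that $v$ lose at most one color from $\List{L}{v}$ is precisely the condition that the colors assigned to the (at most two) neighbors of $v$ meet $\List{L}{v}$ in at most one color.

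I would then construct the coloring by induction on $i$, coloring $u_{1}$ with any color of $\List{L}{u_{1}}$ and extending the coloring one vertex at a time. Suppose $u_{1}, \dots, u_{i}$ have been colored; to color $u_{i+1}$, note that the only vertex of $I_{2}$ that already has a colored neighbor and whose loss can still be affected by $u_{i+1}$ is $w_{i}$, since its neighbors are precisely $u_{i}$ (already colored) and $u_{i+1}$, while the $I_{2}$-vertex lying to the right of $u_{i+1}$ (if it exists) has no colored neighbor yet. If $\Color(u_{i}) \notin \List{L}{w_{i}}$, then $w_{i}$ cannot be made to lose two colors by any choice at $u_{i+1}$, so I would color $u_{i+1}$ with any color of $\List{L}{u_{i+1}}$. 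If instead $\Color(u_{i}) \in \List{L}{w_{i}}$, then $w_{i}$ has already lost the color $\Color(u_{i})$, and to prevent a second loss I would choose $\Color(u_{i+1}) \in \List{L}{u_{i+1}}$ avoiding the set $\List{L}{w_{i}} \setminus \{\Color(u_{i})\}$; since $\card{\List{L}{w_{i}} \setminus \{\Color(u_{i})\}} \leq k - 1 < k = \card{\List{L}{u_{i+1}}}$, such a color always exists.

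Finally I would check that the resulting coloring meets the requirement. Each interior vertex $w_{i} \in I_{2}$ has exactly the two neighbors $u_{i}$ and $u_{i+1}$, and the step that colors $u_{i+1}$ guarantees that $\Color(u_{i})$ and $\Color(u_{i+1})$ together delete at most one color from $\List{L}{w_{i}}$; each $I_{2}$-vertex that is an end point of $P$ has a single neighbor and so loses at most one color automatically. I expect no genuine obstacle beyond this bookkeeping: the entire force of the lemma is the inequality $\card{\List{L}{w_{i}} \setminus \{\Color(u_{i})\}} \leq k - 1 < k$, which is exactly what makes the greedy choice always available (and which, in the degenerate case $k = 1$, leaves no freedom yet still no room for a second loss). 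In particular, \cref{O:main} is not needed for this argument, although it could be used to recast the single-loss condition in terms of the components $P(c)$.
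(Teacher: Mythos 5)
Your proof is correct, and it takes a genuinely different route from the paper's. The paper argues by blocks: it fixes an end point of \(P\), picks a color \(c\) from its list, colors every vertex of \(I_{1}\) in the component \(\alpha\) of \(P(c)\) containing that end point with the single color \(c\), and recurses on the subpath \(P - \alpha\), invoking \cref{O:main} at each boundary to find the next color (one lying in the list of the first vertex of the new subpath but not in the list of the last vertex of the old one). You instead color the \(I_{1}\)-vertices one at a time in path order, each choice constrained only by protecting the lone \(I_{2}\)-vertex \(w_{i}\) between \(u_{i}\) and \(u_{i+1}\). Your version is more elementary: the set you must avoid, \(\List{L}{w_{i}} \setminus \{\Color(u_{i})\}\), has size at most \(k - 1\), so a bare counting argument inside the \(k\)-list \(\List{L}{u_{i+1}}\) suffices, and you correctly observe that \cref{O:main} becomes unnecessary; the paper, by contrast, must find a color outside a full \(k\)-list, which is only possible because that list is known to differ from the list at hand --- precisely the content of \cref{O:main}. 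Your choice is also finer in that it permits \(\Color(u_{i+1}) = \Color(u_{i})\) even when this color lies in \(\List{L}{w_{i}}\). What the paper's block formulation buys is alignment with the way the lemma is deployed in the proof of \cref{T:Main}, where the coloring of the cycle \(H\) is likewise organised by components of \(H(c)\) and the lemma is invoked by starting the coloring at a prescribed end point with a prescribed color; your argument supports the same usage, since \(\Color(u_{1})\) is arbitrary, but that flexibility is worth stating explicitly if your proof were to stand in for the paper's.
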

\begin{proof}
	Fix an end point \(v(P)\) of \(P\). Choose a color \(c \in \List{L}{v(P)}\)
	and consider the component \(\alpha\) of \(P(c)\) containing \(v(P)\).
	Set \(\Color(v) = c\) for every \(v \in I_{1} \cap V(\alpha)\).
	
	If \(\alpha = P\), then we are done, since every \(v \in I_{2}\)
	has lost at most one color from its list, namely \(c\).
	So, suppose that \(\alpha \neq P\), and define \(P_{1} \defn P - \alpha\).
	Then, \(P_{1}\) is a nonempty path, and there is an end point \(v(\alpha)\)
	of \(\alpha\) such that \(w(\alpha)\) is an end point of \(P_{1}\).
	Choose \(c_{1} \in \List{L}{w(\alpha)} \setminus \List{L}{v(\alpha)}\), which
	exists by \cref{O:main}.
	Let \(\alpha_{1}\) be the component	of \(P_{1}(c_{1})\) containing \(w(\alpha)\),
	and set \(\Color(v) = c_{1}\) for every \(v \in I_{1} \cap V(\alpha_{1})\).
	
	Again, if \(\alpha_{1} = P_{1}\), then we are done. Else, let \(P_{2} \defn P_{1} \setminus \alpha_{1}\),
	and proceed inductively until the process terminates.
\end{proof}

\subsection*{Proof of \texorpdfstring{\cref{T:Main}}{Theorem~1.2}}
\begin{proof}
	Let us first show that any simple \(6\)-regular triangulation
	\(G\) that is \(3\)-chromatic is \(5\)-choosable.

	By \cref{T:Altshuler}, \(G\) is isomorphic to \(T(r, s, t)\)
	for some integers \(r \geq 1\), \(s \geq 1\), and \(0 \leq t < s\).
	It is straightforward to check that when
	\(T(r, s, t)\) is a simple graph, it is \(3\)-chromatic if and only if \(s \equiv 0 \equiv r - t \pmod 3\).
	Moreover, \(T(r, s, t)\) is uniquely \(3\)-colorable whenever this happens,
	so let \(I_{1}, I_{2}, I_{3}\) be the three independent sets defined
	by any coloring of \(T(r, s, t)\) with \(3\) colors. Without loss of generality, we
	fix \(I_{j}\) to be the independent set containing \((1, j)\) for \(j = 1, 2, 3\).

	Consider the subgraph \(G_{1} \defn G - I_{1}\) of \(G\). Note that \(G_{1}\) is
	a \(3\)-regular bipartite graph.
	Orient the edges of \(G_{1}\) as follows: every horizontal edge is directed east,
	every vertical edge is directed north, and every diagonal edge is directed south-east.
	(Figure~\ref{F:G1-orientation} shows this for \(G = T(5, 6, 2)\).)
	More formally, give the orientations as follows (recall that addition
	in the second coordinate is taken modulo \(s\)). For every \(1 \leq i \leq r\) and \(1 \leq j \leq s\)
	such that \(j - i \not\equiv 0 \pmod 3\):
	\begin{itemize}
		\item if \(j - i\equiv 2 \pmod 3\), assign \(\diredge{(i, j)}{(i + 1, j)}\) for all \(1 \leq i < r\),
		and \(\diredge{(r, j)}{(1, j - t)}\);

		\item if \(j - i \equiv 1 \pmod 3\), assign \(\diredge{(i, j)}{(i, j + 1)}\) for all \(1 \leq i \leq r\),
		\(\diredge{(i, j)}{(i + 1, j - 1)}\) for all \(1 \leq i < r\), and \(\diredge{(r, j)}{(1, j - t - 1)}\).
	\end{itemize}
	Then, \(\outdegree_{G_{1}}(v) = 2\) for every \(v \in I_{2}\), and
	\(\outdegree_{G_{1}}(v) = 1\) for every \(v \in I_{3}\).
	
	We claim that \(G_{1}\) with this orientation
	has no odd Eulerian subgraphs. For, if \(H\) is an Eulerian
	subgraph of \(G_{1}\) and \(v \in V(H)\) is not an isolated vertex, then \(\degree_{H}(v)\)
	is a positive even integer, so \(\degree_{H}(v) = 2\).
	Thus, \(H\) is a disjoint union of cycles and isolated vertices, but
	every cycle in a bipartite graph is even, so this proves our claim.
	Since the empty graph is an even Eulerian subgraph of \(G_{1}\),
	by \cref{T:Alon--Tarsi} \(G_{1}\) is \(\ListAsgn{L}\)-choosable
	for every list assignment \(\ListAsgn{L}\)
	such that \(\card{\List{L}{v}} \geq 3\) for all \(v \in I_{2}\) and \(\card{\List{L}{v}} \geq 2\) for all
	\(v \in I_{3}\).
	
	\begin{figure}
	\begin{minipage}{0.5\textwidth}
	\begin{tikzpicture}[font=\scriptsize]
	\path [draw,postaction={on each segment={mid arrow}}]
		(1,2) -- (1,3) -- (2,3) -- (2,4) -- (3,4) -- (3,5) -- (4,5) -- (4,6) -- (5,6)
		(5,1) -- (6,1) -- (6,2)
		(1,5) -- (1,6) -- (2,6)
		(2,1) -- (3,1) -- (3,2) -- (4,2) -- (4,3) -- (5,3) -- (5,4) -- (6,4) -- (6,5)
		(1,2) -- (2,1)
		(2,3) -- (3,2)
		(3,4) -- (4,3)
		(4,5) -- (5,4)
		(5,6) -- (6,5)
		(1,5) -- (2,4)
		(2,6) -- (3,5)
		(4,2) -- (5,1)
		(5,3) -- (6,2);
	\foreach \x in {1,...,6}
		\foreach \y in {1,...,6}{
		\pgfmathparse{Mod(\y-\x,3)==0?1:0}
		\ifnum\pgfmathresult>0
			\draw[fill=red, opacity=0.2] (\x,\y) circle(2pt);
		\else
			\pgfmathparse{Mod(\y-\x,3)==1?1:0}
			\ifnum\pgfmathresult>0
				\draw[fill=blue] (\x,\y) circle(2pt);
			\else
				\draw[fill=green] (\x,\y) circle(2pt);
			\fi
		\fi
		}
	\foreach \y in {2,3,5,6}{
		\node[anchor=east] at (1,\y) {(1,\y)};
		\pgfmathparse{Mod(\y-5,6)}
		\node[anchor=west] at (6,\pgfmathresult+1) {(1,\y)};
	}
	\foreach \x in {2,3,5}
		\node[anchor=north] at (\x,1) {(\x,1)};
	\foreach \x in {2,4,5}
		\node[anchor=south] at (\x,6) {(\x,6)};
	\end{tikzpicture}
	\caption{\(G_{1}\) with oriented edges}\label{F:G1-orientation}
	\end{minipage}\hfill
	\begin{minipage}{0.5\textwidth}
	\begin{tikzpicture}[font=\scriptsize]
	\draw
		(1,1) -- (1,2) -- (2,2) -- (2,3) -- (3,3) -- (3,4) -- (4,4) -- (4,5) -- (5,5) -- (5,6) -- (6,6)
		(3,1) -- (4,1) -- (4,2) -- (5,2) -- (5,3) -- (6,3) -- (6,4)
		(1,4) -- (1,5) -- (2,5) -- (2,6) -- (3,6);
	\draw[dotted]
		(3, 1) -- (2, 2)
		(4, 2) -- (3, 3)
		(5, 3) -- (4, 4)
		(6, 4) -- (5, 5)
		(2, 3) -- (1, 4)
		(3, 4) -- (2, 5)
		(4, 5) -- (3, 6)
		(5, 2) -- (6, 1);
	\foreach \x in {1,...,6}
		\foreach \y in {1,...,6}{
		\pgfmathparse{Mod(\y-\x,3)==0?1:0}
		\ifnum\pgfmathresult>0
			\draw[fill=red] (\x,\y) circle(2pt);
		\else
			\pgfmathparse{Mod(\y-\x,3)==1?1:0}
			\ifnum\pgfmathresult>0
				\draw[fill=blue] (\x,\y) circle(2pt);
			\else
				\draw[fill=green, opacity=0.2] (\x,\y) circle(2pt);
			\fi
		\fi
		}
	\foreach \y in {1,2,4,5}{
		\node[anchor=east] at (1,\y) {(1,\y)};
		\pgfmathparse{Mod(\y-5,6)}
		\node[anchor=west] at (6,\pgfmathresult+1) {(1,\y)};
	}
	\foreach \x in {3,4}
		\node[anchor=north] at (\x,1) {(\x,1)};
	\foreach \x in {2,3,5}
		\node[anchor=south] at (\x,6) {(\x,6)};
	\end{tikzpicture}
	\caption{The subgraph \(H\)}\label{F:H--T(5,6,2)}
	\end{minipage}
	\end{figure}

	Now, suppose that we are given a \(5\)-list assignment \(\ListAsgn{L}\)
	on \(G\).
	If we arbitrarily assign \(\Color(v) \in \List{L}{v}\) for every
	\(v \in I_{1}\), then---in the worst-case scenario---we are left
	with a \(2\)-list assignment on \(G_{1} = G - I_{1}\).
	However, to apply \cref{T:Alon--Tarsi} on \(G_{1}\) with the orientation
	described above, we need \(3\)-lists on \(I_{2}\). So, let
	us now consider the subgraph \(H\) on \(I_{1} \cup I_{2}\) with only the horizontal
	and vertical edges present (see Figure~\ref{F:H--T(5,6,2)}). We will show that we can color \(I_{1}\)
	carefully in such a way that every vertex in \(I_{2}\) loses at most one color in \(H\);
	thus every vertex in \(I_{2}\) will lose at most two colors in \(G\) (since
	every \(v \in I_{2}\) is adjacent to one more vertex of \(I_{1}\) in \(G\) than in \(H\),
	shown by the dotted edges in Figure~\ref{F:H--T(5,6,2)}). Then, we will
	be done by invoking \cref{T:Alon--Tarsi}.
	
	Note that \(H\) is a disjoint union of even cycles
	(in fact, there are exactly \(\gcd(s, r - t) / 3\) cycles in \(H\)). 
	First, assume for the sake of simplicity that there
	is only one cycle in \(H\) (as in Figure~\ref{F:H--T(5,6,2)}). We eliminate the trivial case:
	if there is a color that belongs to every list
	on \(I_{1}\), then we just color
	\(I_{1}\) with that color, and we are done. Thus, it suffices
	to assume that there is no color common to every list on \(I_{1}\).
	
	Now, suppose there exists a color \(c\) such that
	\(H(c)\) has a component \(\alpha\) of even order.
	Assign \(\Color(v) = c\) for all \(v \in I_{1} \cap V(\alpha)\).
	Observe that:
	\begin{itemize}
		\item Every \(v \in V(\alpha) \cap I_{2}\)
		loses at most one color (namely, \(c\)) from its list.
	
		\item Since \(\alpha\) has even order, it has an endpoint \(v_{j}(\alpha)\) in \(I_{j}\)
		for \(j = 1, 2\).
		
		\item The vertex \(w_{1}(\alpha) \in I_{2}\) has not lost any color in its list
		due to the coloring of \(v_{1}(\alpha)\), so we are free to put any color
		on the other vertex adjacent to \(w_{1}(\alpha)\). 
		
		\item On the other hand, the vertex \(w_{2}(\alpha) \in I_{1}\) must be colored
		carefully, since \(v_{2}(\alpha) \in I_{2}\) has already lost one color from its list.
	\end{itemize}
	By \cref{O:main}, there exists a color \(d \in \List{L}{w_{2}(\alpha)} \setminus \List{L}{v_{2}(\alpha)}\).
	So, consider the path \(P \defn H - \alpha\). Apply \cref{L:KeyLemma}
	to \(P\) by starting the coloring at \(w_{2}(\alpha)\) with the color \(d\).
	Then, every \(v \in I_{2}\) will indeed have lost at most one color from its list,
	so we are done.
	
	On the other hand, suppose that for every color \(c\), every component
	of \(H(c)\) has odd order. Choose a color \(c\) and a component \(\alpha\)
	of \(H(c)\) such that both ends of \(\alpha\) lie in \(I_{1}\).
	We can always do this because if \(\alpha'\) is a component of \(H(c')\)
	such that both ends of \(\alpha'\) lie in \(I_{2}\), then choose
	\(c \in \List{L}{w(\alpha')} \setminus \List{L}{v(\alpha')}\) for some
	end point \(v(\alpha')\), and let \(\alpha\) be the component of \(H(c)\)
	containing \(w(\alpha')\). Then, both ends of \(\alpha\) lie in \(I_{1}\)
	as required. Assign \(\Color(v) = c\) for all \(v \in I_{1} \cap V(\alpha)\).
	In this case, we have that:
	\begin{itemize}
		\item Every \(v \in I_{2} \cap V(\alpha)\)
		loses at most one color (namely, \(c\)) from its list.
	
		\item Since both ends, \(v_{1}(\alpha)\) as well as \(v_{2}(\alpha)\),
		lie in \(I_{1}\), the vertices \(w_{1}(\alpha)\) and \(w_{2}(\alpha)\)
		in \(I_{2}\) have not yet lost any color from their lists, so we are
		free to put any color on their other neighbors.
	\end{itemize}
	Thus, we consider the path \(P \defn H - \alpha\), and apply \cref{L:KeyLemma}
	to \(P\) starting from any end point with any color. Then, every \(v \in I_{2}\)
	will again have lost at most one color from its list, so we are done.

	If \(H\) consists of more than one cycle, we repeat this process
	for each cycle. Thus, with this coloring scheme for \(I_{1}\),
	we are left with the required list sizes on the vertices of \(G_{1} = G - I_{1}\),
	so we are done by \cref{T:Alon--Tarsi}.
	Thus, we have shown that every simple \(6\)-regular triangulation
	that is \(3\)-chromatic is \(5\)-choosable. 
	
	Now, on the one hand, any simple triangulation requires at least \(3\) colors for a proper coloring.
	On the other hand, \(H(1) = 7\), so every toroidal graph requires
	no more than \(7\) colors for a proper coloring by \cref{T:Heawood}.
	Furthermore, \cref{T:Dirac} shows that
	\(\chr(G) = 7 \iff \ch(G) = 7\) for any toroidal graph \(G\).
	Hence, if \(G\) is a simple triangulation on the torus
	with \(\jump(G) > 2\), then it must be that \(\chr(G) = 3\) and \(\ch(G) = 6\).
	However, we have shown above that this cannot happen if \(G\) is \(6\)-regular.
	Hence, \(\jump(G) \leq 2\) for any simple \(6\)-regular triangulation \(G\) on the torus.
	We have also seen that \(\jump(G) \leq 2\) for any loopless \(6\)-regular
	triangulation \(G\) that contains multiple edges. This completes the proof
	of \cref{T:Main}.
\end{proof}

\section{Asymptotics of the jump function}\label{S:asymptotics}

For the sake of simplicity, we ignore any ceilings and floors in the following proofs.

\subsection*{Proof of \texorpdfstring{\cref{T:asymp1}}{Theorem~1.3}}
\begin{proof}
	Let \(g \geq 1\). For any graph \(G\), \(\ch(G) - \chr(G) \leq \ch(G)\),
	and if \(G\) is embeddable in \(S_{g}\) then \(\ch(G) \leq H(g)\)
	by \cref{T:Heawood}. Thus, \(\jump(g) \leq H(g) \leq 7 \sqrt{g}\) for all \(g \geq 1\).

	To establish the lower bound,
	let \(m \geq 2\) be fixed, and consider the graph \(K_{m * r}\) for \(r \geq 2\).
	Note that \(\chr(K_{m * r}) = \chr(K_{r}) = r\), and
	\(\ch(K_{m * r}) \geq c_{1} r \log(m)\)
	by \cref{T:Alon-multipartite}.
	Hence, \(\jump(K_{m * r}) \geq c_{1} r \log(m) - r\).
	Pick \(m\) large enough so that \(c_{1} \log(m) \geq 2\).
	Then, \(\jump(K_{m * r}) \geq r\). Thus, we would like to show that \(K_{m * r}\)
	is embeddable in \(S_{g}\) for \(g \leq \bigoh{r^{2}}\).
	
	\cref{T:Ringel-Youngs} says that \(K_{r}\) is embeddable in \(S_{\OGenus(r)}\),
	so start with an embedding of \(K_{r}\) into \(S_{\OGenus(r)}\) and then
	add handles to \(S_{\OGenus(r)}\) to accommodate
	the extra edges of \(K_{m * r}\). Since \(\card{E(K_{r})} = \binom{r}{2}\) and
	\(\card{E(K_{m * r})} = m^{2}\binom{r}{2}\),
	we need to add at most \((m^{2} - 1) \binom{r}{2}\) handles to \(S_{\OGenus(r)}\).
	Since \(\OGenus(r) + (m^{2} - 1)\binom{r}{2} \leq m^{2}r^{2}/2\), \(K_{m * r}\) is embeddable
	in \(S_{g(r)}\) where \(g(r) \defn m^{2} r^{2} / 2\), and this is what we wanted to show.
	Hence, \(\jump\parens[\big]{g(r)} \geq r = c \sqrt{g(r)}\) for all \(r \geq 2\), where \(c \defn \sqrt{2}/m\).
	
	Finally, suppose that \(g(r) \leq g' \leq g(r+1)\) for a fixed \(r \geq 2\).
	\(K_{m * r}\) is embeddable in \(S_{g'}\) as well, so
	\[
		\jump(g') \geq \jump(K_{m * r}) \geq r = c \sqrt{g(r + 1)} - 1 \geq c \sqrt{g'} - 1 \geq c' \sqrt{g'}
	\]
	for any positive constant \(c' < c\), provided \(r\) (and hence \(g(r)\))
	is sufficiently large. This completes the proof.
\end{proof}

\subsection*{Proof of \texorpdfstring{\cref{T:asymp2}}{Theorem~1.6}}
\begin{proof}
	Let \(\epsilon > 0\) and \(g \geq 1\). Let \(G\) be simple and embeddable in \(S_{g}\)
	with \(\chr(G) = r\), and let \(\card{V(G)} = m\). To show
	that \(\jump(G) \leq \epsilon\sqrt{g}\) it suffices to show
	that \(\ch(G) \leq \epsilon\sqrt{g}\) since \(\jump(G) \leq \ch(G)\).
	By \cref{T:core}, there is no loss of generality in assuming
	that \(G\) is equal to its \((\epsilon\sqrt{g})\)-core.
	Thus, the minimum degree, and hence average degree \(2\nume / \numv\), of \(G\)
	is bounded below by \(\epsilon \sqrt{g}\).
	By Euler's formula, \(2\nume / \numv \leq 6 + 12(g - 1)/m\),
	and hence \(m \leq 24\sqrt{g}/\epsilon\).

	Now, \(G\) is a subgraph of \(K_{m * r}\), and
	\(\ch(G) \leq \ch(K_{m * r}) \leq c_{2} r \log(m)\)
	by \cref{T:Alon-multipartite}.
	Then, for every \(\delta > 0\),
	\[
		\ch(K_{m * r})
			\leq c_{2} r \log(m)
			\leq c_{2} \parens*{\frac{\delta \sqrt{g}}{\log(g)}} \parens*{\log(24/\epsilon) + \frac{1}{2}\log(g)}
			\leq c_{2} \delta \sqrt{g} \parens*{\frac{\log(24/\epsilon)}{\log(g)} + \frac{1}{2}}
			< c_{2} \delta\sqrt{g}
	\]
	for all sufficiently large \(g\). Let \(\delta = \epsilon/c_{2}\)
	to finish the proof.
\end{proof}

\section{Conclusions and further remarks}\label{S:conclusion}

\subsection{The jump for general toroidal graphs}

We discuss some partial results towards computing \(\jump(G)\)
for a general toroidal graph \(G\).

Firstly, we will need the following result of Alon and Tarsi~\cite{AloTar92}
used in their proof that bipartite planar graphs
are \(3\)-choosable. For a graph \(G\), define
\(L(G) \defn \max_{H \leq G}\{ \card{E(H)} / \card{V(H)} \}\), where the maximum
is taken over all subgraphs \(H = (V, E)\) of \(G\).

\begin{theorem}[Alon--Tarsi~\cite{AloTar92}, 1992]
Every bipartite graph \(G\) is 	\((\ceil{L(G)} + 1)\)-choosable.
\end{theorem}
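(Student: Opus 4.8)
The plan is to deduce the theorem from the Alon--Tarsi theorem (\cref{T:Alon--Tarsi}). To apply it I need an orientation of \(G\) satisfying two things: (i) the number of even Eulerian subgraphs differs from the number of odd ones, and (ii) every vertex \(v\) has \(\outdegree(v) \leq \ceil{L(G)}\), so that any list assignment with \(\card{L_v} = \ceil{L(G)} + 1\) meets the hypothesis \(\card{L_v} \geq \outdegree(v) + 1\). The whole argument then reduces to producing an orientation achieving (ii), since bipartiteness will make (i) automatic.

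For (i), I would argue that condition holds for \emph{any} orientation once \(G\) is bipartite. If \(H\) is an Eulerian subgraph, then \(\indegree_H(v) = \outdegree_H(v)\) at every vertex, so the edge set of \(H\) decomposes into edge-disjoint directed cycles. Each such cycle is an even cycle because \(G\) is bipartite, so \(\card{E(H)}\) is a sum of even numbers and is therefore even. Hence there are \emph{no} odd Eulerian subgraphs, while the empty subgraph is an even Eulerian subgraph; the two counts (at least one versus zero) always differ. This disposes of (i) independently of the chosen orientation.

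For (ii), set \(d \defn \ceil{L(G)}\). By definition of \(L(G)\), every subgraph \(H \leq G\) satisfies \(\card{E(H)} \leq L(G)\,\card{V(H)} \leq d\,\card{V(H)}\). I would then use the standard fact that a graph admits an orientation with all out-degrees at most \(d\) exactly when \(\card{E(H)} \leq d\,\card{V(H)}\) for all subgraphs \(H\). Concretely, I would prove the direction I need by a Hall-type argument: consider the bipartite assignment problem between the edge set \(E(G)\) and the vertex set \(V(G)\), where each edge may be assigned to one of its two endpoints and each vertex may receive at most \(d\) edges; orienting each edge away from the endpoint it is assigned to yields out-degrees at most \(d\). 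By the deficiency form of Hall's theorem, such an assignment exists iff every \(S \subseteq E(G)\) satisfies \(\card{S} \leq d\,\card{N(S)}\), where \(N(S)\) is the set of vertices incident to edges of \(S\); taking \(H\) to be the subgraph with edge set \(S\) on vertex set \(N(S)\) turns this into exactly \(\card{E(H)} \leq d\,\card{V(H)}\), which is our hypothesis. (Alternatively, one may cite Hakimi's orientation theorem directly, or decompose \(G\) into \(d\) pseudoforests by Nash--Williams/matroid union and orient each with out-degree at most one.)

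Combining the two parts, I fix such an orientation, with \(\outdegree(v) \leq d\) for every \(v\). Then for any \((d+1)\)-list assignment \(\ListAsgn{L}\) we have \(\card{L_v} \geq \outdegree(v) + 1\), and since the even/odd Eulerian counts differ, \cref{T:Alon--Tarsi} gives that \(G\) is \(\ListAsgn{L}\)-choosable. As \(\ListAsgn{L}\) was arbitrary of list size \(d + 1\), it follows that \(G\) is \((\ceil{L(G)} + 1)\)-choosable. The main obstacle is step (ii): the content lies in recognizing that the density bound \(L(G) \leq d\) is precisely the combinatorial condition guaranteeing a \(d\)-bounded out-degree orientation, which is where the Hall/flow argument does the real work; the bipartite hypothesis enters only through the easy step (i).
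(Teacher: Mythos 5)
Your proof is correct and complete. The paper does not actually prove this statement (it is quoted from Alon--Tarsi, 1992), but your argument---deducing it from \cref{T:Alon--Tarsi} by noting that bipartiteness forces every Eulerian subgraph to decompose into even directed cycles (so the odd count is zero while the empty subgraph makes the even count positive), and extracting an orientation with all out-degrees at most \(\lceil L(G) \rceil\) from the density hypothesis via the Hall-type assignment of edges to endpoints---is essentially the original derivation that the paper invokes by citation.
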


Using Euler's formula, it follows that any planar bipartite graph \(G = (V, E)\) satisfies
\(\nume \leq 2 \numv - 4\), so \(L(G) \leq 2\).
Hence, every bipartite planar graph is \(3\)-choosable.

The same analysis goes through for toroidal bipartite graphs, since
Euler's formula applied on toroidal bipartite graphs \(G = (V, E)\)
yields \(\nume \leq 2 \numv\),
so again \(L(G) \leq 2\). Thus, every bipartite toroidal graph is also \(3\)-choosable.
Hence, if \(G\) is a toroidal graph with \(\jump(G) > 2\), then
\(G\) cannot be bipartite, so it must be that \(\chr(G) \geq 3\).
On the other hand, if \(\chr(G) \geq 4\), then \(\jump(G) \leq 2\),
since every toroidal graph is \(7\)-choosable, but every
\(7\)-list-chromatic toroidal graph is also \(7\)-chromatic,
by \cref{T:Dirac}.

Hence, any counterexample to the claim that \(\jump(1) = 2\)
must be a toroidal graph \(G\) for which \(\chr(G) = 3\) and \(\ch(G) = 6\).
Now, suppose that there do exist such graphs,
and choose one with the minimal number of vertices.
Then, by \cref{T:core}, its minimum degree is at least \(5\).
Recall that Euler's formula shows that the average degree, \(2\nume/\numv\),
of a toroidal graph \(G\) satisfies \(2\nume/\numv \leq 6\), and equality holds
if \(G\) is a triangulation.
Thus, if \(\delta(G) = 6\), then \(G\) is in fact a \(6\)-regular triangulation.
This motivates one to examine \(6\)-regular triangulations in particular,
and we have shown in \cref{T:Main} that \(\jump(G) \leq 2\) for such graphs.

Thus, any minimal counterexample \(G\) must satisfy \(\delta(G) = 5\).
Moreover, if we add edges to \(G\) while preserving its toroidicity and chromatic number,
then we can get a minimal counterexample whose faces are all either triangular or
quadrangular, which we call a \defining{mosaic} following Nakamoto, Noguchi and Ozeki~\cite{NakNogOze19}.
Thus, it follows that \(\jump(1) > 2\) if and only if there is a
\(3\)-chromatic toroidal mosaic of minimum degree \(5\) that is not \(5\)-choosable.

Now, it so happens that there do exist \(3\)-chromatic
toroidal graphs \(G\) with \(\delta(G) = 5\) (see \cref{F:lemma1,F:lemma2,F:lemma3}; again,
the edges between the top and bottom rows are omitted).
While ad hoc arguments can show that \(\jump(G) \leq 2\) for some
of these graphs, a unified argument is still missing.
An Alon--Tarsi type argument appears more difficult to implement since there maybe several
vertices with degree greater than \(10\), so it is not clear how to orient the edges appropriately.

\afterpage{
\begin{figure}
\vspace*{0.5cm}
\begin{minipage}{0.5\textwidth}
	\begin{tikzpicture}[font=\scriptsize]
	\foreach \y in {2,...,6}
		\foreach \x in {1,4,5}{
			\draw
			(\x,\y) -- (\x+1,\y-1);
	}
	\foreach \y in {2,...,6}
		\foreach \x in {1,2,4,5,6}{
			\draw
			(\x,\y) -- (\x,\y-1);
	}
	\foreach \x in {1,...,5}
		\foreach \y in {1,2,3,5,6}{
			\draw
			(\x,\y) -- (\x+1,\y);
	}
	\foreach \y in {2,3,6}{
		\draw
		(2,\y) -- (3,\y-1)
		(3,\y) -- (4,\y-1)
		(3,\y) -- (3,\y-1);
	}
	\foreach \x in {1,4,5}{
		\draw
		(\x,4) -- (\x+1,4);
	}
	\draw
	(3,3) -- (2,4) -- (4,4) -- (3,5);
	\foreach \x in {1,...,6}{
		\foreach \y in {1,2,3,5,6}{
			\pgfmathparse{Mod(\y-\x,3)==0?1:0}
			\ifnum\pgfmathresult>0
				\draw[fill=red] (\x,\y) circle(2pt);
			\else
				\pgfmathparse{Mod(\y-\x,3)==1?1:0}
				\ifnum\pgfmathresult>0
					\draw[fill=blue] (\x,\y) circle(2pt);
				\else
					\draw[fill=green] (\x,\y) circle(2pt);
				\fi
			\fi
		}
	}
	\foreach \x in {1,2,4,5,6}{
		\pgfmathparse{Mod(4-\x,3)==0?1:0}
		\ifnum\pgfmathresult>0
			\draw[fill=red] (\x,4) circle(2pt);
		\else
			\pgfmathparse{Mod(4-\x,3)==1?1:0}
			\ifnum\pgfmathresult>0
				\draw[fill=blue] (\x,4) circle(2pt);
			\else
				\draw[fill=green] (\x,4) circle(2pt);
			\fi
		\fi		
	}
	\foreach \y in {1,...,6}{
		\node[anchor=east] at (1,\y) {(1,\y)};
		\pgfmathparse{Mod(\y-5,6)}
		\node[anchor=west] at (6,\pgfmathresult+1) {(1,\y)};
	}
	\foreach \x in {2,...,5}{
		\node[anchor=north] at (\x,1) {(\x,1)};
		\node[anchor=south] at (\x,6) {(\x,6)};
	}
	\end{tikzpicture}
	\caption{Toroidal \(G\) with \(\chr(G) = 3\),
	\(\delta(G)=5\), \(\Delta(G)=6\)}\label{F:lemma1}
\end{minipage}\hfill
\begin{minipage}{0.5\textwidth}
	\begin{tikzpicture}[font=\scriptsize]
	\foreach \y in {2,...,6}
		\foreach \x in {1,4,5}{
			\draw
			(\x,\y) -- (\x+1,\y-1);
	}
	\foreach \y in {2,...,6}
		\foreach \x in {1,2,4,5,6}{
			\draw
			(\x,\y) -- (\x,\y-1);
	}
	\foreach \x in {1,...,5}
		\foreach \y in {1,2,5,6}{
			\draw
			(\x,\y) -- (\x+1,\y);
	}
	\foreach \y in {2,6}{
		\draw
		(2,\y) -- (3,\y-1)
		(3,\y) -- (4,\y-1)
		(3,\y) -- (3,\y-1);
	}
	\foreach \x in {1,4,5}{
		\draw
		(\x,3) -- (\x+1,3)
		(\x,4) -- (\x+1,4);
	}
	\draw
	(3,2) -- (2,3) -- (4,3)
	(3,5) -- (4,4) -- (2,4)
	(4,4) -- (2,3);
	\foreach \y in {1,...,6}{
		\foreach \x in {1,2,4,5,6}{
			\pgfmathparse{Mod(\y-\x,3)==0?1:0}
			\ifnum\pgfmathresult>0
				\draw[fill=red] (\x,\y) circle(2pt);
			\else
				\pgfmathparse{Mod(\y-\x,3)==1?1:0}
				\ifnum\pgfmathresult>0
					\draw[fill=blue] (\x,\y) circle(2pt);
				\else
					\draw[fill=green] (\x,\y) circle(2pt);
				\fi
			\fi
		}
	}
	\foreach \y in {1,2,5,6}{
		\pgfmathparse{Mod(\y-3,3)==0?1:0}
		\ifnum\pgfmathresult>0
			\draw[fill=red] (3,\y) circle(2pt);
		\else
			\pgfmathparse{Mod(\y-3,3)==1?1:0}
			\ifnum\pgfmathresult>0
				\draw[fill=blue] (3,\y) circle(2pt);
			\else
				\draw[fill=green] (3,\y) circle(2pt);
			\fi
		\fi		
	}
	\foreach \y in {1,...,6}{
		\node[anchor=east] at (1,\y) {(1,\y)};
		\pgfmathparse{Mod(\y-5,6)}
		\node[anchor=west] at (6,\pgfmathresult+1) {(1,\y)};
	}
	\foreach \x in {2,...,5}{
		\node[anchor=north] at (\x,1) {(\x,1)};
		\node[anchor=south] at (\x,6) {(\x,6)};
	}
	\end{tikzpicture}
	\caption{Toroidal \(G\) with \(\chr(G) = 3\),
	\(\delta(G)=5\), \(\Delta(G)=7\)}\label{F:lemma2}
\end{minipage}\hfill
\end{figure}

\begin{figure}
	\vspace*{1cm}
	\begin{tikzpicture}[font=\scriptsize]
	\foreach \y in {2,...,6}{
		\pgfmathsetmacro{\z}{7-\y}
		\foreach \x in {1,...,\z}{
			\draw
			(\x,\y) -- (\x,\y-1)
			(\x,\y-1) -- (\x+1,\y-1)
			(\x,\y) -- (\x+1,\y-1);
		}
	}
	\foreach \x in {4,...,8}{
		\pgfmathsetmacro{\z}{13-\x}
		\foreach \y in {\z,...,9}{
			\draw
			(\x+1,\y) -- (\x+1,\y-1)
			(\x,\y) -- (\x+1,\y)
			(\x,\y) -- (\x+1,\y-1);
		}
	}
	\foreach \x in {6,7,8}{
		\draw
		(\x,2) -- (\x,1)
		(\x,1) -- (\x+1,1)
		(\x,2) -- (\x+1,2)
		(\x,2) -- (\x+1,1);
	}
	\foreach \x in {1,2,3}{
		\draw
		(\x,9) -- (\x,8)
		(\x,8) -- (\x+1,8)
		(\x,9) -- (\x+1,9)
		(\x,9) -- (\x+1,8);
	}
	\foreach \y in {3,4}{
		\draw
		(8,\y) -- (9,\y)
		(8,\y) -- (8,\y-1)
		(9,\y) -- (9,\y-1)
		(8,\y) -- (9,\y-1);
	}
	\foreach \y in {7,8}{
		\draw
		(1,\y-1) -- (2,\y-1)
		(1,\y) -- (1,\y-1)
		(2,\y) -- (2,\y-1)
		(1,\y) -- (2,\y-1);
	}
	\draw
	(4,9) -- (4,8) -- (5,8)
	(2,5) -- (2,6)
	(5,2) -- (6,2)
	(9,1) -- (9,2)
	(8,4) -- (8,5);
	\draw
	(3,8) -- (5,5) -- (8,3)
	(4,8) -- (5,5) -- (8,4)
	(2,7) -- (5,5) -- (7,2)
	(2,6) -- (5,5) -- (6,2)
	(3,4) -- (5,5) -- (6,7)
	(4,3) -- (5,5) -- (7,6);
	\foreach \y in {1,...,6}{
		\pgfmathsetmacro{\z}{7-\y}
		\foreach \x in {1,...,\z}{
			\pgfmathparse{Mod(\y-\x,3)==0?1:0}
			\ifnum\pgfmathresult>0
				\draw[fill=red] (\x,\y) circle(2pt);
			\else
				\pgfmathparse{Mod(\y-\x,3)==1?1:0}
				\ifnum\pgfmathresult>0
					\draw[fill=blue] (\x,\y) circle(2pt);
				\else
					\draw[fill=green] (\x,\y) circle(2pt);
				\fi
			\fi
		}
	}
	\foreach \x in {4,...,9}{
		\pgfmathsetmacro{\z}{13-\x}
		\foreach \y in {\z,...,9}{
			\pgfmathparse{Mod(\y-\x,3)==0?1:0}
			\ifnum\pgfmathresult>0
				\draw[fill=red] (\x,\y) circle(2pt);
			\else
				\pgfmathparse{Mod(\y-\x,3)==1?1:0}
				\ifnum\pgfmathresult>0
					\draw[fill=blue] (\x,\y) circle(2pt);
				\else
					\draw[fill=green] (\x,\y) circle(2pt);
				\fi
			\fi
		}
	}
	\draw[fill=red]
	(1,7) circle(2pt)
	(2,8) circle(2pt)
	(3,9) circle(2pt)
	(7,1) circle(2pt)
	(8,2) circle(2pt)
	(9,3) circle(2pt)
	(5,5) circle(2pt);
	\draw[fill=green]
	(2,7) circle(2pt)
	(3,8) circle(2pt)
	(1,9) circle(2pt)
	(6,2) circle(2pt)
	(8,4) circle(2pt)
	(8,1) circle(2pt)
	(9,2) circle(2pt);
	\draw[fill=blue]
	(1,8) circle(2pt)
	(2,9) circle(2pt)
	(2,6) circle(2pt)
	(4,8) circle(2pt)
	(7,2) circle(2pt)
	(8,3) circle(2pt)
	(9,1) circle(2pt);
	\foreach \y in {1,...,9}{
		\node[anchor=east] at (1,\y) {(1,\y)};
		\pgfmathparse{Mod(\y-2,9)}
		\node[anchor=west] at (9,\pgfmathresult+1) {(1,\y)};
	}
	\foreach \x in {2,...,8}{
		\node[anchor=north] at (\x,1) {(\x,1)};
		\node[anchor=south] at (\x,9) {(\x,9)};
	}
	\end{tikzpicture}
	\caption{Toroidal \(G\) with \(\chr(G) = 3\),
	\(\delta(G)=5\), \(\Delta(G)=12\)}\label{F:lemma3}
\end{figure}
\clearpage
}

A relevant concept to mention here is \defining{color criticality} of graphs.
For \(k \geq 1\), a \defining{\(k\)-critical} graph is one that is not \((k - 1)\)-colorable
but whose proper subgraphs are. Similarly,
a \defining{\(k\)-list-critical} graph
is one for which there is a \(k\)-list assignment \(\ListAsgn{L}\)
such that the graph is not \(\ListAsgn{L}\)-choosable but every proper subgraph
is. Every \(k\)-critical graph is thus also \(k\)-list-critical, but in general
there are \(k\)-list-critical graphs that are not \(k\)-critical. Furthermore,
a \(k\)-critical graph cannot contain another \(k\)-critical graph as a proper subgraph,
but this statement is not true if we replace ``\(k\)-critical'' with ``\(k\)-list-critical'',
since a graph may be list-critical with respect to a list assignment \(\ListAsgn{L}\)
but not \(\ListAsgn{L'}\), yet it may contain a proper subgraph that is list-critical
with respect to \(\ListAsgn{L'}\) but not \(\ListAsgn{L}\). So, one defines
a \defining{minimal} \(k\)-list-critical graph to be one which does not
contain a \(k\)-list-critical graph as a proper subgraph.

Observe that a minimal counterexample to our claim that also has the least number
of edges must be a minimal \(6\)-list-critical graph. Postle and Thomas~\cite{PosTho18} showed
in 2018 that there are only finitely many \(6\)-list-critical graphs
on any surface. Thus, the claim that every toroidal graph has
gap at most \(2\) needs to be verified against
only finitely many potential counterexamples; however, a complete list of
\(6\)-list-critical graphs on the torus does not yet exist in the literature
(though a complete list of \(6\)-critical graphs was given by Thomassen~\cite{Tho94a} in 1994).
Stiebitz, Tuza and Voigt~\cite{StiTuzVoi09} in 2009 showed that 
for all \(2 \leq r \leq k\), there is a minimal \(k\)-list critical graph
that is \(r\)-chromatic, so one also cannot immediately rule out the existence
of a minimal \(6\)-list-critical graph on the torus that is \(3\)-chromatic.

We note that computing tight bounds even for triangulations on surfaces
of higher values of \(g\) appears difficult due to the lack of structural results
similar to Altshuler's theorem on the torus (\cref{T:Altshuler}).

\subsection{Analogous results for graphs embeddable in nonorientable surfaces}

As mentioned in \cref{S:Introduction}, the restriction to orientable surfaces
has only been for convenience, and the results proved in \cref{S:asymptotics} also hold
over nonorientable surfaces with suitable modifications, which we indicate below.

The Heawood number for nonorientable surfaces is
	\[
		\widetilde{H}(k) \defn \floor*{\frac{7+\sqrt{1+24k}}{2}}.
	\]
By essentially the same argument as given by Heawood~\cite{Hea90} for the orientable case,
\(\widetilde{H}(k)\) is an upper bound for the chromatic number of any graph \(G\)
embeddable in \(N_{k}\), \(k \geq 1\), and the argument also carries forward to
the choice number, so \(\chr(G) \leq \ch(G) \leq \widetilde{H}(k)\)
for every \(G\) embeddable in \(N_{k}\), \(k \geq 1\).

This upper bound is not tight for the Klein bottle \(N_{2}\),
as shown by Franklin~\cite{Fra34} in 1934: \(\widetilde{H}(2) = 7\),
but every graph embeddable in \(N_{2}\) is \(6\)-colorable.
In particular, \(K_{6}\) is embeddable in \(N_{2}\),
but \(K_{7}\) is not. Using \cref{T:Brooks}
one can show that every graph embeddable in \(N_{2}\) is also \(6\)-choosable.

For every other nonorientable surface, Heawood's upper bound is tight.
Ringel~\cite{Rin54a} showed in 1954 that for every \(r \geq 1\), except \(r = 2\),
\(K_{r}\) is embeddable in \(N_{\NOGenus(r)}\)
for \(\NOGenus(r) \defn \ceil{(r - 3)(r - 4) / 6}\) and this is best possible. This implies
that \(K_{\widetilde{H}(k)}\) is embeddable in \(N_{k}\) for all \(k \geq 1\) except \(k = 2\).
Dirac's map color theorem~\cites{Dir52c,AlbHut79} also extends to \(N_{k}\) for all \(k \geq 1\), except \(k = 2\).
In the latter case, \(K_{6}\) is not the only \(6\)-chromatic graph that is
embeddable in \(N_{2}\) (see~\cite{AlbHut79} for an example of another such graph).
Dirac's map color theorem for the choice number~\cites{BohMohSti99,KraSkr06}
also extends to \(N_{k}\) for all \(k \geq 1\), except \(k = 2\).

We define, analogously, the functions \(\Jump(k)\) and \(\Jump(k,r)\).
Using the above results it is not hard to show the following:
\begin{theorem}
	\(\Jump(1) = 2\). That is, for graphs \(G\) embeddable in the projective plane \(N_{1}\),
	\(\jump(G) \leq 2\). Moreover, this bound is best possible.
\end{theorem}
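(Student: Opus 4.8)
The plan is to combine the nonorientable Heawood bound with a short case analysis on \(\chr(G)\), mirroring the discussion of \(\jump(1)\) for toroidal graphs carried out above. First I would record that \(\widetilde{H}(1) = 6\), so that by the Heawood bound for \(N_{1}\) every graph \(G\) embeddable in the projective plane satisfies \(\chr(G) \le \ch(G) \le 6\). Since \(\jump(G) = \ch(G) - \chr(G)\), the a priori range is \(0 \le \jump(G) \le 6\), and to prove \(\Jump(1) \le 2\) it suffices to exclude \(\jump(G) = 3\), i.e.\ to show that no \(N_{1}\)-embeddable graph has \(\ch(G) = \chr(G) + 3\). I would split according to the value of \(\chr(G)\).

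For \(\chr(G) \ge 4\) there is nothing to do, since \(\ch(G) \le 6\) already forces \(\jump(G) \le 2\). For the bipartite case \(\chr(G) \le 2\) I would run exactly the Euler's-formula argument used above for the torus: the Euler characteristic of \(N_{1}\) is \(1\), so a bipartite graph embeddable in \(N_{1}\) satisfies \(\nume \le 2\numv - 2\), and the same estimate passes to every subgraph, whence \(L(G) \le 2\). The Alon--Tarsi bound for bipartite graphs then yields \(\ch(G) \le 3\), so \(\jump(G) \le 1\). The only remaining case is \(\chr(G) = 3\), where I must rule out \(\ch(G) = 6\); here I would invoke Dirac's map color theorem for the choice number, which (as noted in the text) extends to \(N_{1}\): if \(G\) is embeddable in \(N_{1}\) and \(\ch(G) = \widetilde{H}(1) = 6\), then \(K_{6}\) is a subgraph of \(G\), forcing \(\chr(G) \ge 6\). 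This contradicts \(\chr(G) = 3\), so \(\ch(G) \le 5\) and \(\jump(G) \le 2\). Together the three cases give \(\Jump(1) \le 2\).

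For the matching lower bound I would use that every planar graph embeds in \(N_{1}\) (a planar graph embeds in a disc, and a disc embeds in the projective plane). Since \(\jump(0) = 2\) is realized by a planar \(3\)-chromatic graph that is not \(4\)-choosable, such as the graph of Mirzakhani~\cite{Mir96}, this graph has \(\chr = 3\), \(\ch = 5\), hence \(\jump = 2\), and it is embeddable in \(N_{1}\); therefore \(\Jump(1) \ge 2\) and equality follows. I expect the genuinely nontrivial ingredient to be the \(\chr(G) = 3\) case: it is essential that Dirac's map color theorem for choosability is available on the projective plane---an extension that \emph{fails} on the Klein bottle \(N_{2}\)---since this is precisely what collapses the potential gap of \(3\) down to \(2\).
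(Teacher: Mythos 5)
Your proof is correct and is essentially the argument the paper intends (the paper leaves it as "not hard to show" from the results it has collected): the nonorientable Heawood bound $\widetilde{H}(1)=6$ for the case $\chr(G)\geq 4$, the Euler-formula/Alon--Tarsi argument giving $3$-choosability of bipartite projective-planar graphs, Dirac's map color theorem for choosability on $N_{1}$ to collapse the case $\chr(G)=3$, and a planar $3$-chromatic non-$4$-choosable graph for the matching lower bound. One inessential slip in your framing: excluding $\jump(G)=3$ alone would not suffice a priori (e.g.\ a bipartite graph with $\ch(G)=6$ would have $\jump(G)=4$), but since your case analysis bounds $\jump(G)\leq 2$ directly in every case, this remark is never actually used and the proof stands.
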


The asymptotic bounds on \(\Jump(k)\) and \(\Jump(k, r)\) are also
the same as in the orientable case. The proofs go through in a similar manner.
One just needs to know what the result is of adding a handle to a nonorientable surface,
for which the following theorem is useful:
\begin{theorem}[Dyck~\cite{Dyc88}, 1888]
	The connected sum of a torus and a projective plane is isomorphic to
	the connected sum of three projective planes.
\end{theorem}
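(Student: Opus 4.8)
The plan is to work with polygonal presentations of the two closed surfaces and to transform one into the other by the standard edge-word cut-and-paste moves that preserve homeomorphism type. Recall that the torus $S_{1}$ is presented by the edge word $aba^{-1}b^{-1}$ and the projective plane $N_{1}$ by the word $cc$, and that forming a connected sum (delete an open disc from each summand and glue along the two resulting boundary circles) corresponds to concatenating words. Thus the left-hand surface $S_{1} \# N_{1}$ is presented by $aba^{-1}b^{-1}cc$, while the connected sum $N_{1} \# N_{1} \# N_{1}$ of three projective planes is presented by $aabbcc$. The theorem is therefore equivalent to the word equivalence $aba^{-1}b^{-1}cc \sim aabbcc$, where $\sim$ denotes convertibility by the elementary operations (relabeling, cyclic rotation, cancellation of a free adjacent pair $xx^{-1}$, and cutting along a diagonal followed by regluing along an identified edge pair).

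The crux is the cut-and-paste lemma asserting that a handle lying next to a crosscap can be traded for two additional crosscaps; conceptually, the crosscap supplies an orientation-reversing path that lets the handle be \emph{untwisted} into a pair of crosscaps. I would prove $aba^{-1}b^{-1}cc \sim aabbcc$ directly by the usual normalization moves from the classification of surfaces: successively cut the polygon along a diagonal to introduce an auxiliary edge, reglue along one of the identified pairs so as to slide the crosscap $cc$ past the commutator block $aba^{-1}b^{-1}$, and cancel any free pairs that arise, repeating until every generator occurs as a like-oriented adjacent pair. The result of this normalization is exactly the three-crosscap word $aabbcc$.

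The main obstacle is bookkeeping rather than ideas: each diagonal cut introduces a new symbol and each regluing permutes and reorients the surviving symbols, so one must track orientations carefully to be sure that the inverse occurrences $a^{-1}, b^{-1}$ of the handle really get converted into like-oriented crosscap pairs, and that no stray handle survives. I would therefore present the argument as a short chain of explicitly written edge words, each obtained from the previous one by a single named move (cut, paste, cancel, or relabel), so that the whole passage $aba^{-1}b^{-1}cc \sim aabbcc$ can be checked mechanically.

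Finally, as both a sanity check and a fully self-contained alternative, I would argue via the classification theorem for closed surfaces, which says that the homeomorphism type of such a surface is determined by its orientability together with its Euler characteristic. Both surfaces are nonorientable, since each contains a M\"obius band coming from the crosscap summand $N_{1}$. Using $\chi(A \# B) = \chi(A) + \chi(B) - 2$ together with $\chi(S_{1}) = 0$ and $\chi(N_{1}) = 1$, the left-hand side has $\chi(S_{1} \# N_{1}) = 0 + 1 - 2 = -1$, and the right-hand side has $\chi(N_{1} \# N_{1} \# N_{1}) = -1$ as well (equivalently, both surfaces are $N_{3}$, with $\chi(N_{3}) = 2 - 3 = -1$). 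Having matching orientability and Euler characteristic, the two surfaces are homeomorphic, which completes the proof modulo the classification theorem.
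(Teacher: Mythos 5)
The paper does not actually prove this statement: it is quoted as a classical theorem of Dyck (1888) and invoked as a black box in the discussion of nonorientable surfaces, so there is no internal proof to compare yours against. Judged on its own merits, your proposal does contain a complete and correct proof---but it is your second argument, not the first. The classification-based argument is sound: both surfaces are closed; both are nonorientable, since each contains a M\"obius band (a projective-plane summand minus the removed open disc \emph{is} a M\"obius band); and the formula \(\chi(A \mathbin{\#} B) = \chi(A) + \chi(B) - 2\) gives \(\chi = -1\) on both sides, so both surfaces are homeomorphic to \(N_{3}\) by the classification theorem. Two caveats. First, your primary cut-and-paste argument is a plan rather than a proof: the explicit chain of edge-word moves taking \(aba^{-1}b^{-1}cc\) to \(aabbcc\)---which is precisely the content of Dyck's theorem---is promised but never written down, so if it stood alone it would be a genuine gap. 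Second, deducing Dyck's theorem from the classification theorem is logically valid but somewhat inverted: standard proofs of the classification theorem perform exactly the handle-to-two-crosscaps trade you describe as their key normalization step, so the ``self-contained alternative'' quietly outsources the hard work to a theorem whose usual proof contains the statement being proved. For the purpose this statement serves in the paper (determining the effect of adding a handle to a nonorientable surface), either route is adequate.
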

One also needs to modify Euler's formula for nonorientable surfaces as follows: if \(G\) is embeddable in
\(N_{k}\), \(k \geq 1\), and \(\numv\), \(\nume\) and \(\numf\) denote the number
of vertices, edges, and faces of \(G\) in an embedding of \(G\) in \(N_{k}\), respectively,
then \(\numv - \nume + \numf \geq 2 - k\), with equality holding if every face
is homeomorphic to a disc.

\subsection{Concluding remarks}

We have proved in \cref{T:Main} that any loopless \(6\)-regular triangulation
\(G\) on the torus satisfies \(\jump(G) \leq 2\). 
We speculate that \emph{every} \(3\)-chromatic toroidal graph is in fact \(5\)-choosable:
\begin{conjecture}\label{Conj:1}
	\(\jump(1, 3) = 2\).
\end{conjecture}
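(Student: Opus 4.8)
The plan is to prove the two bounds \(\jump(1,3) \ge 2\) and \(\jump(1,3) \le 2\) separately. The lower bound is immediate: Mirzakhani's graph~\cite{Mir96} (or the graph of Gutner~\cite{Gut96}, shown by Voigt and Wirth~\cite{VoiWir97} to be \(3\)-chromatic) is a planar \(3\)-chromatic graph that is not \(4\)-choosable; being planar it is \(5\)-choosable by Thomassen~\cite{Tho94b}, so \(\ch = 5\) and its jump is exactly \(2\), and planarity implies toroidality. The entire difficulty therefore lies in the upper bound, that is, in showing that every connected \(3\)-chromatic toroidal graph is \(5\)-choosable.

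For the upper bound I would first run the reductions recorded in \cref{S:conclusion}. Suppose \(G\) is a vertex-minimal connected \(3\)-chromatic toroidal graph that is not \(5\)-choosable. Then \(\ch(G) = 6\), since \(\ch(G) \le 7\) always and \(\ch(G) = 7\) would force \(\chr(G) = 7\) by \cref{T:Dirac}. By \cref{T:core} the graph equals its own \(5\)-core (the core is not \(5\)-choosable, hence not bipartite, hence still \(3\)-chromatic), so \(\delta(G) \ge 5\). If \(\delta(G) = 6\) then \(2\nume/\numv \le 6\) forces \(G\) to be a \(6\)-regular triangulation, contradicting \cref{T:Main}; hence \(\delta(G) = 5\). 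Adding edges while preserving toroidicity and the chromatic number, we may assume \(G\) is a mosaic. Thus the conjecture reduces to the single assertion that \emph{every \(3\)-chromatic toroidal mosaic of minimum degree \(5\) is \(5\)-choosable}.

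To attack this reduced statement I would extend the Alon--Tarsi scheme of \cref{T:Main}. Fix a proper \(3\)-coloring \(V(G) = I_1 \cup I_2 \cup I_3\). The bipartite graph \(G - I_1\) has the pleasant feature that \emph{every} Eulerian subgraph has an even number of edges---in a bipartite graph, a subgraph with all degrees even has its edge count equal to a sum of even vertex-degrees on one side---so the hypothesis of \cref{T:Alon--Tarsi} holds for \emph{any} orientation; moreover, since toroidal bipartite graphs satisfy \(\nume \le 2\numv\) hereditarily, \(G - I_1\) admits an orientation with every out-degree at most \(2\). The remaining task is to color \(I_1\) from a prescribed \(5\)-list assignment so that each vertex of \(G - I_1\) loses at most two colors, for then its residual list has size at least \(3\), exceeding its out-degree by at least one, and \cref{T:Alon--Tarsi} finishes the job. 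Here the tools of \cref{S:jump-triangulation}---\cref{O:main} and the alternate-vertex coloring of \cref{L:KeyLemma}---are the natural instruments, possibly preceded by a discharging analysis limiting how the high-degree vertices of a \(\delta = 5\) mosaic can be arranged. A complementary, computational route is opened by Postle and Thomas~\cite{PosTho18}: a smallest counterexample would be a minimal \(6\)-list-critical toroidal graph, of which there are only finitely many, so one could in principle enumerate them and check that none is \(3\)-chromatic.

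The hard part will be the coloring of \(I_1\). In \cref{T:Main} the graph was uniquely \(3\)-colorable, \(G - I_1\) was \(3\)-regular, and the relevant subgraph was a disjoint union of even cycles, so each vertex of \(I_2\) could be made to lose only one color along a path and one along a single chord. For a general mosaic none of this is available: the \(3\)-coloring need not be unique, and---as noted in \cref{S:conclusion} and visible in \cref{F:lemma1,F:lemma2,F:lemma3}---a vertex may have arbitrarily many neighbors in \(I_1\), so there is no evident way to color \(I_1\) keeping every color loss at or below two simultaneously across all of \(I_2 \cup I_3\). If some vertex has five \(I_1\)-neighbors carrying pairwise disjoint lists, the naive approach fails outright, so any successful argument must exploit the freedom in choosing which class to delete together with the mosaic's local structure. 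The computational route faces its own obstacle: no complete list of \(6\)-list-critical toroidal graphs is presently known, and the theorem of Stiebitz, Tuza and Voigt~\cite{StiTuzVoi09} shows that \(3\)-chromatic minimal \(6\)-list-critical graphs exist in general, so they cannot be excluded on abstract grounds. Resolving either of these---a robust orientation-and-coloring scheme insensitive to high-degree vertices, or a full structural enumeration on the torus---is precisely what the conjecture demands.
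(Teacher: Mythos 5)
You should know at the outset that the statement you were asked to prove is not a theorem of this paper: it appears there as \cref{Conj:1}, an \emph{open conjecture}, and the paper contains no proof of it. So there is no ``paper proof'' to compare against; your attempt can only be judged on its own terms, and on those terms it is a correct reduction but not a proof. The parts you do establish are sound: the lower bound \(\jump(1,3) \geq 2\) via Mirzakhani's (or Gutner's) planar \(3\)-chromatic non-\(4\)-choosable graph together with Thomassen's theorem, and the chain of reductions for the upper bound --- a vertex-minimal counterexample has \(\ch = 6\) by \cref{T:Dirac}, equals its \(5\)-core by \cref{T:core}, cannot have \(\delta = 6\) since \(\delta = 6\) forces a \(6\)-regular triangulation contradicting \cref{T:Main}, hence has \(\delta = 5\) and can be completed to a mosaic. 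This reproduces, correctly, exactly the discussion the paper itself records in \cref{S:conclusion}. Your observation that in a directed bipartite graph \emph{every} Eulerian subgraph is even (each edge has exactly one endpoint in one part, and Eulerian degrees are even, so the edge count is a sum of even numbers), so that \cref{T:Alon--Tarsi} applies to any orientation of \(G - I_{1}\) with outdegrees at most \(2\), is also correct, and is in fact a cleaner argument than the regularity-based one used in the proof of \cref{T:Main}.

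The genuine gap is the one you name yourself: no argument is given that \(I_{1}\) can be colored from arbitrary \(5\)-lists so that every vertex of \(G - I_{1}\) loses at most two colors. For \(6\)-regular triangulations this step was the entire content of \cref{L:KeyLemma} and the case analysis in the proof of \cref{T:Main}, and it leaned on three features that a general \(3\)-chromatic toroidal mosaic with \(\delta = 5\) does not have: uniqueness of the \(3\)-coloring, \(3\)-regularity of \(G - I_{1}\), and the fact that the horizontal-vertical subgraph \(H\) is a disjoint union of even cycles. As you observe, a vertex of a mosaic may have many \(I_{1}\)-neighbors with pairwise disjoint lists, at which point any per-vertex loss bound of two is unachievable by this route, so a successful argument must exploit the freedom in the choice of color class and the local structure of mosaics in a way neither you nor the paper supplies. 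The alternative route through Postle--Thomas finiteness of \(6\)-list-critical graphs is likewise blocked, as you note, by the absence of an explicit enumeration on the torus and by the Stiebitz--Tuza--Voigt examples showing \(3\)-chromatic minimal \(6\)-list-critical graphs cannot be ruled out abstractly. What you have, then, is a correct restatement and sharpening of the open problem --- essentially the same reduction the authors give --- not a resolution of it.
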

	A resolution to \cref{Conj:1} will also answer \cref{Q:toroidal-bound},
	as we have observed that the maximum gap for toroidal graphs is at least \(2\),
	and neither toroidal bipartite graphs nor toroidal graphs with chromatic number
	at least \(4\) can attain a gap greater than \(2\).
	
	An easy example of a \(6\)-regular triangulation \(G\) on the torus for
	which \(\jump(G) = 1\) is furnished by \(T(3, 3, 0)\),
	which is isomorphic to the complete multipartite
	graph \(K_{3 * 3}\). Kierstead~\cite{Kie00} in 2000 showed that \(\ch(K_{3*r}) = \ceil{(4r - 1)/3}\)
	for every \(r \geq 1\), so \(\ch(K_{3 * 3}) = 4\), and since
	\(T(3, 3, 0) \equiv T(r, s, t)\) satisfies \(s \equiv 0 \equiv r - t \pmod 3\),
	it is \(3\)-chromatic. Hence, \(\jump\parens[\big]{T(3, 3, 0)} = 1\).
	
	We are also able to show that the \(3\)-chromatic \(6\)-regular triangulations
	are not \(3\)-choosable, which will appear in a forthcoming paper~\cite{BalSan21b}.
	But, we do not have an explicit example of a \(6\)-regular triangulation for which
	\(\jump(G) = 2\). One of the authors is unsure whether any such graph exists,
	while the other believes that there might be one; in any case, we pose the following question:
\begin{question}\label{Q:2}
	Are there \(3\)-chromatic \(6\)-regular triangulations that
	are \(5\)-list-chromatic?
\end{question}

The results on the asymptotic behavior of \(\jump(g)\) and \(\jump(g,r)\)
motivate the following conjectures that refine the results proved
in \cref{S:asymptotics}:

\begin{conjecture}\label{Conj:3}
	\(\jump(g, r)\) is unimodal in \(r\) for each fixed \(g\). That is,
	there exists \(r_{0} \equiv r_{0}(g)\) such that
	\[
		\jump(g, 1) \leq \jump(g, 2) \leq \dotsb \leq \jump(g, r_{0}) \geq \dotsb \geq \jump\parens[\big]{g, H(g)}.
	\]
\end{conjecture}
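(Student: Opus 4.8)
The plan is to pin down the two ends of the sequence and then establish weak monotonicity on either side of a single threshold \(r_{0}\). At the bottom, \(\jump(g,1)=0\), since the only connected \(1\)-chromatic graph is a single vertex. At the top, \cref{T:Heawood,T:Dirac} give \(\jump\parens[\big]{g,H(g)}=\jump\parens[\big]{g,H(g)-1}=0\); more usefully, for \emph{every} \(r\) the trivial bound \(\ch(G)\le H(g)\) yields \(\jump(g,r)\le H(g)-r\), a cap that decreases linearly in \(r\) and must eventually drive the sequence down. So the real content is to locate \(r_{0}\) below which the sequence is nondecreasing and above which it is nonincreasing.

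For the increasing side I would use the apex (join) operation \(G\mapsto G\vee K_{1}\). First I would record the gap-preserving identities \(\chr(G\vee K_{1})=\chr(G)+1\) and \(\ch(G\vee K_{1})=\ch(G)+1\). The upper bound on the choice number follows by coloring the apex first from a list of size \(\ch(G)+1\), which leaves every vertex of \(G\) a list of size at least \(\ch(G)\); the matching lower bound follows by placing a singleton fresh color on the apex together with a worst-case \((\ch(G)-1)\)-list assignment on \(G\). Hence \(\jump(G\vee K_{1})=\jump(G)\), so if \(G\) is extremal for \(\jump(g,r)\) and \(G\vee K_{1}\) still embeds in \(S_{g}\), then \(\jump(g,r+1)\ge\jump(g,r)\). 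The task is therefore to show that an extremal \(G\) can be chosen with enough embedding slack — for instance an embedding with a single face meeting all vertices, or genus bounded away from \(g\) — so that the apex adds no more handles than the surface can absorb, at least while \(r\) lies below the threshold.

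The main obstacle is precisely this control of the genus under the join: adding a vertex adjacent to all of \(G\) can in the worst case raise the genus by roughly \(\card{V(G)}\), pushing \(G\vee K_{1}\) out of \(S_{g}\). Overcoming this seems to require structural information about the extremal graphs — an analogue of Altshuler's theorem (\cref{T:Altshuler}) for higher \(g\) — which is exactly what is missing. For the decreasing side one can try the reverse move, deleting a color class or an apex to drop \(\chr\) by one, an operation that never increases genus so that embeddability is automatic; but it is unclear that the gap is preserved, since deletion can lower \(\ch\) uncontrollably. Consequently, while the two endpoint regimes already behave as the conjecture predicts, the crux is to rule out multiple local maxima and to exhibit a single crossover \(r_{0}\). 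I expect this to require a finer understanding of the extremal configurations than is presently available, which is why we record \cref{Conj:3} as a conjecture rather than a theorem.
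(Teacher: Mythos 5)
First, a point of calibration: the paper does not prove this statement at all. \cref{Conj:3} is posed as a conjecture, and the only support offered for it is a verification in the two lowest-genus cases, assembled from known or partially known values: \(\jump(g,1)=0\) for all \(g\), then \(\jump(0,2)=1\), \(\jump(0,3)=2\), \(\jump(0,4)=1\), \(\jump(0,r)=0\) for \(r\geq 5\), and for the torus \(\jump(1,2)=1\), \(\jump(1,3)\in\{2,3\}\), \(\jump(1,4)\in\{1,2\}\), \(\jump(1,5)\in\{0,1\}\), \(\jump(1,6)=\jump(1,7)=0\). Your endpoint computations are correct and consistent with this: \(\jump(g,1)=0\), and \cref{T:Heawood,T:Dirac} do give \(\jump\parens[\big]{g,H(g)}=\jump\parens[\big]{g,H(g)-1}=0\). (Note, though, that the cap \(\jump(g,r)\leq H(g)-r\) only provides a decreasing \emph{envelope}; it does not by itself make any terminal segment of the sequence monotone, so it is weaker evidence than you suggest.) Your decision to end by conceding the statement as a conjecture therefore lands exactly where the paper does.

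However, the one substantive lemma you propose along the way --- the ``gap-preserving identity'' \(\ch(G\vee K_{1})=\ch(G)+1\) --- is false, and this is an error rather than a gap. Your lower-bound certificate is invalid: to witness that \(G\vee K_{1}\) is not \(\ch(G)\)-choosable you must produce a failing assignment in which \emph{every} list has size \(\ch(G)\); giving the apex a singleton list and the vertices of \(G\) lists of size \(\ch(G)-1\) certifies nothing (by the same reasoning, a worst-case \((\ch(G)-1)\)-assignment on \(G\) alone would ``show'' that \(G\) is not \(\ch(G)\)-choosable, which is absurd). Moreover, the identity itself, not just its proof, fails: take \(G=K_{3,3}\) and \(H=K_{3,3}\vee K_{1}\). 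Then \(\card{V(H)}=7=2\chr(H)+1\), so the theorem of Noel, Reed and Wu \cite{NoeReeWu15} quoted in \cref{S:Introduction} gives \(\ch(H)=\chr(H)=3\); since \(K_{3,3}\subseteq H\) and \(\ch(K_{3,3})>2\) by \cite{ErdRubTay80}, we get \(\ch(K_{3,3})=3=\ch(H)\), not \(\ch(H)-1\). Joining with dominating vertices is precisely the operation that \emph{collapses} the gap between \(\ch\) and \(\chr\) (this is the content of Ohba's conjecture), so \(\jump(G\vee K_{1})=\jump(G)\) cannot be used to transport extremal graphs from chromatic number \(r\) to \(r+1\) --- and this failure occurs even before the genus-control problem that you correctly identify as an obstacle. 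What survives of your attempt is only the endpoint behavior, which is the same evidence the paper already records.
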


This is already seen to be true for planar and toroidal graphs.
Firstly, a loopless graph is \(1\)-chromatic if and only if it is empty,
which implies that it is also \(1\)-list-chromatic, so \(\jump(g, 1) = 0\)
for all \(g \geq 0\). Next, the results mentioned in \cref{S:Introduction}
show that
\(\jump(0, 2) = 1\), 
\(\jump(0, 3) = 2\), 
and \(\jump(0, 4) = 1\). 
Lastly, \(\jump(0, r) = 0\) for \(r \geq 5\) by the four color theorem
due to Appel and Haken~\cites{AppHak77,AppHakKoc77}.
So, \(\jump(0, r)\) is indeed unimodal in \(r\).

Similarly, for toroidal graphs, though we don't have precise values
of \(\jump(g, r)\) for all \(r\), the discussion so far shows that
\(\jump(1, 2) = 1\), \(\jump(1, 3) = 2\) or \(3\), \(\jump(1, 4) = 1\) or \(2\),
\(\jump(1, 5) = 0\) or \(1\), \(\jump(1, 6) = 0\) and \(\jump(0, 7) = 0\).
So, \(\jump(1, r)\) is unimodal in \(r\) as well.

It is not hard to show that unimodality also holds for \(\Jump(1, r)\)
and \(\Jump(2, r)\),
that is, for the projective plane and the Klein bottle; so, \cref{Conj:3} can be extended
to the nonorientable case as well.

One also notices that there may be more than one value \(r_{0}\)
at which the maximum gap is attained. So, define
\(r_{\max} \equiv r_{\max}(g)\) to be the least value of \(r_{0}(g)\)
in \cref{Conj:3}. We conjecture that \(\sqrt{g}\) is the correct order of \(r\)
at which \(\jump(g, r)\) attains its maximum value for each fixed \(g\):
\begin{conjecture}\label{Conj:4}
	For all sufficiently large \(g\), \(r_{\max} = \bigtheta{\sqrt{g}}\).
\end{conjecture}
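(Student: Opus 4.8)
The plan is to establish the two bounds $r_{\max} \leq \bigoh{\sqrt{g}}$ and $r_{\max} \geq \bigomega{\sqrt{g}}$ separately. The upper bound is immediate: every graph $G$ embeddable in $S_{g}$ satisfies $\chr(G) \leq H(g)$ by \cref{T:Heawood}, and $H(g) \leq 7\sqrt{g}$ for all $g \geq 1$; since $\jump(g, r) = 0$ whenever $r > H(g)$, every maximiser $r$ of $\jump(g, \cdot)$ — in particular the least one, $r_{\max}$ — satisfies $r_{\max} \leq H(g) \leq \bigoh{\sqrt{g}}$. So all the work lies in the lower bound, which amounts to showing that the maximum value $\jump(g) = \max_{r} \jump(g, r)$ is \emph{not} attained at any $r$ of order below $\sqrt{g}$.

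A first pass at the lower bound follows immediately from the results of \cref{S:asymptotics}. By \cref{T:asymp1} there is a constant $c_{1} > 0$ with $\jump(g) \geq c_{1}\sqrt{g}$ for all large $g$. On the other hand, the bound $\ch(G) \leq c_{2} r \log(m)$ used to prove \cref{T:asymp2} is monotone in $r$, so for every $\delta > 0$ it yields $\jump(g, r) \leq \tfrac{1}{2}c_{1}\sqrt{g} < \jump(g)$ simultaneously for \emph{all} $r \leq \delta\sqrt{g}/\log(g)$, once $g$ is large. Hence no $r$ of order $\littleoh{\sqrt{g}/\log(g)}$ can be a maximiser of $\jump(g, \cdot)$, and so $r_{\max} \geq \bigomega{\sqrt{g}/\log(g)}$. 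Thus the conjecture is already true up to a multiplicative factor of $\log(g)$, and the entire difficulty is concentrated in removing this factor.

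The main obstacle is therefore to strengthen \cref{T:asymp2} so that its conclusion $\jump(g, r) = \littleoh{\sqrt{g}}$ holds already under the weaker hypothesis $r = \littleoh{\sqrt{g}}$. Tracing the proof of \cref{T:asymp2}, after passing to the $(\epsilon\sqrt{g})$-core via \cref{T:core} one works with a graph on $m = \bigoh{\sqrt{g}}$ vertices and bounds $\ch(G) \leq \ch(K_{m * r}) \leq c_{2} r \log(m)$ using \cref{T:Alon-multipartite}; the factor $\log(m) = \bigtheta{\log(g)}$ is exactly the lost factor. This estimate is wasteful precisely when $G$ has a color class of size $\bigtheta{\sqrt{g}}$, since then replacing $G$ by the complete multipartite graph on its color classes inflates the largest part to order $\sqrt{g}$. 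Removing the loss would require an upper bound on $\ch(G)$ for $r$-partite graphs embeddable in $S_{g}$ that genuinely exploits their sparsity ($\nume = \bigoh{g}$) rather than treating them as subgraphs of $K_{m * r}$. The hard part is that the degree and genus constraints do not forbid color classes of size $\bigtheta{\sqrt{g}}$, so Alon's bound — which is tight for complete multipartite graphs — cannot be improved by soft arguments alone; a genuinely new coloring ingredient (for instance a container- or entropy-compression-type argument tuned to sparse graphs) appears to be needed.

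Finally, I would verify the conjecture on the natural extremal family, both to gain confidence and to pin down the expected extremal structure. For the complete multipartite graphs $K_{m * r}$, combining \cref{T:Alon-multipartite} with the embedding and handle-counting argument of \cref{T:asymp1} gives $\chr = r$, $\ch = \bigtheta{r \log(m)}$, and embeddability in $S_{g}$ with $g = \bigtheta{m^{2} r^{2}}$, whence $\sqrt{g} = \bigtheta{mr}$. A direct computation then shows that among these graphs the jump $\bigtheta{\sqrt{g}}$ is realised exactly when the part size $m$ is bounded and $r = \bigtheta{\sqrt{g}}$, while $r = \littleoh{\sqrt{g}}$ forces $m \to \infty$ and hence $\ch = \littleoh{\sqrt{g}}$. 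This is consistent with $r_{\max} = \bigtheta{\sqrt{g}}$ and suggests that the graphs witnessing the maximum gap have color classes of bounded size — precisely the structural fact that an improved version of \cref{T:asymp2} would have to capture.
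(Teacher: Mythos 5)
You have not proved the statement, and there is no proof in the paper to compare against either: \cref{Conj:4} is posed as an \emph{open conjecture}, and what you have written reproduces, essentially correctly, the state of knowledge that led the authors to pose it rather than settle it. Your first two paragraphs are sound in substance: \(r_{\max} \leq H(g) \leq 7\sqrt{g}\) follows from \cref{T:Heawood} together with \(\jump(g) \geq c_{1}\sqrt{g} > 0\) (\cref{T:asymp1}), and the proof of \cref{T:asymp2} is indeed uniform enough in \(r\) to yield a constant \(\delta_{0} > 0\) with \(r_{\max} \geq \delta_{0}\sqrt{g}/\log(g)\) for all large \(g\). (One quantifier needs repair: the uniform bound \(\jump(g,r) \leq \tfrac{1}{2}c_{1}\sqrt{g}\) for all \(r \leq \delta\sqrt{g}/\log(g)\) does not hold ``for every \(\delta > 0\)''; tracing the proof of \cref{T:asymp2} with core parameter \(\tfrac{1}{2}c_{1}\sqrt{g}\) gives \(\jump(g,r) \leq c_{2}\delta\sqrt{g}\bigl(\tfrac{1}{2} + \littleoh{1}\bigr)\), so you need \(\delta < c_{1}/c_{2}\), which is all your conclusion requires.) But the conjecture asserts \(r_{\max} \geq \bigomega{\sqrt{g}}\), and that is precisely what neither you nor the paper establishes.

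The missing content is the removal of the \(\log(g)\) factor from the lower bound, and your own third paragraph correctly explains why the available machinery cannot supply it: the only upper bound on \(\jump(g,r)\) in play is \(\ch(G) \leq \ch(K_{m * r}) \leq c_{2}\, r \log(m)\) with \(m \leq \bigoh{\sqrt{g}}\) after passing to the core (\cref{T:core}, \cref{T:Alon-multipartite}), and since Alon's bound is tight for complete multipartite graphs, no rearrangement of these estimates can exclude a maximiser of \(\jump(g,\cdot)\) at some \(r = \bigtheta{\sqrt{g}/\log(g)}\). In other words, everything proved so far — in the paper and in your proposal — is consistent with the conjecture being \emph{false}. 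Your proposed remedies (a choosability bound for \(r\)-partite graphs on \(S_{g}\) that exploits \(\nume = \bigoh{g}\) rather than the containment \(G \subseteq K_{m * r}\), or container/entropy-compression arguments) are a reasonable research programme, and your consistency check on the family \(K_{m * r}\) (bounded part size forces \(r = \bigtheta{\sqrt{g}}\) at the maximum jump) is a correct and genuinely useful heuristic for what the extremal structure should look like, but neither is a proof. The verdict: correct partial results and a correct identification of the obstruction, but the key step — the lower bound \(r_{\max} \geq \bigomega{\sqrt{g}}\) — is a genuine gap, and the statement remains a conjecture.
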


Again, \cref{Conj:4} can be extended analogously to the nonorientable case as well.

Finally, a structural result on the Klein bottle \(N_{2}\) for \(6\)-regular triangulations,
similar in spirit to Altshuler's theorem on the torus (\cref{T:Altshuler}),
was given independently by Negami~\cite{Neg84} in 1984 and Thomassen~\cite{Tho91} in 1991.
We pose the following question on the Klein bottle analogous to \cref{Q:toroidal-bound} for the torus,
and also ask whether the \(6\)-regular triangulations on the Klein bottle can be examined to get a
result analogous to \cref{T:Main} for the torus.
\begin{question}\label{Q:5}
	What is \(\Jump(1)\)? That is, how large can the gap between the choice
	number and chromatic number be for a graph embeddable on the Klein bottle?
\end{question}

Just as in the toroidal case, it is not hard to show that the maximum gap cannot be smaller than \(2\),
and we ask how large this gap can get.

\begin{question}\label{Q:6}
	What is the maximum value of \(\Jump(G)\) for any loopless \(6\)-regular
	triangulation \(G\) on the Klein bottle?
\end{question}

However, computing \(\Jump(k)\) precisely for higher values of \(k\) seems difficult
for the same reason as in the orientable case.

\newpage
\begin{bibdiv}
\begin{biblist}

\raggedright

\bib{AlbHut79}{article}{
      author={Albertson, Michael~Owen},
      author={Hutchinson, Joan~Prince},
       title={The three excluded cases of {Dirac's} map-color theorem},
  conference={
  			title={Second international conference on combinatorial mathematics},
		  address={New York},
		  	 date={1978},
			 },
		book={
      		series={Ann. New York Acad. Sci.},
      		volume={319},
   		 publisher={New York Acad. Sci.},
     	   address={N. Y.},
        	  date={1979},
	  		 },
       pages={7\ndash 17},
         doi={10.1111/j.1749-6632.1979.tb32768.x},
      review={\MR{556001}},
      review={\Zbl{489.05023}},
}

\bib{Alo92}{article}{
      author={Alon, Noga},
       title={Choice numbers of graphs: {A} probabilistic approach},
        date={1992},
        ISSN={0963-5483 (Print), 1469-2163 (Online)},
     journal={Combin. Probab. Comput.},
      volume={1},
      number={2},
       pages={107\ndash 114},
         doi={10.1017/S0963548300000122},
      review={\MR{1179241}},
      review={\Zbl{793.05076}}
}

\bib{Alo00}{article}{
      author={Alon, Noga},
       title={Degrees and choice numbers},
        date={2000},
        ISSN={1042-9832 (Print), 1098-2418 (Online)},
     journal={Random Structures Algorithms},
      volume={16},
      number={4},
       pages={364\ndash 368},
         doi={10.1002/1098-2418(200007)16:4<364::AID-RSA5>3.0.CO;2-0},
      review={\MR{1761581}},
      review={\Zbl{958.05049}},
}

\bib{AloTar92}{article}{
      author={Alon, Noga},
      author={Tarsi, Michael},
       title={Colorings and orientations of graphs},
        date={1992},
        ISSN={0209-9683 (Print), 1439-6912 (Online)},
     journal={Combinatorica},
      volume={12},
      number={2},
       pages={125\ndash 134},
         doi={10.1007/BF01204715},
      review={\MR{1179249}},
      review={\Zbl{756.05049}},
}

\bib{Alt73}{article}{
      author={Altshuler, Amos},
       title={Construction and enumeration of regular maps on the torus},
        date={1973},
        ISSN={0012-365X (Print), 1872-681X (Online)},
     journal={Discrete Math.},
      volume={4},
      number={3},
       pages={201\ndash 217},
         doi={10.1016/S0012-365X(73)80002-0},
      review={\MR{321797}},
      review={\Zbl{253.05117}},
}

\bib{AppHak77}{article}{
      author={Appel, Kenneth~Ira},
      author={Haken, Wolfgang},
       title={Every planar map is four colorable. {Part I: Discharging}},
        date={1977},
        ISSN={0019-2082 (Print), 1945-6581 (Online)},
     journal={Illinois J. Math.},
      volume={21},
      number={3},
       pages={429\ndash 490},
         doi={10.1215/ijm/1256049011},
      review={\MR{543792}},
      review={\Zbl{387.05009}}
}

\bib{AppHakKoc77}{article}{
      author={Appel, Kenneth~Ira},
      author={Haken, Wolfgang},
      author={Koch, John~Allen},
       title={Every planar map is four colorable. {Part II: Reducibility}},
        date={1977},
        ISSN={0019-2082 (Print), 1945-6581 (Online)},
     journal={Illinois J. Math.},
      volume={21},
      number={3},
       pages={491\ndash 567},
         doi={10.1215/ijm/1256049012},
      review={\MR{543793}},
      review={\Zbl{387.05010}},
}

\bib{BalSan21b}{article}{
      author={Balachandran, Niranjan},
      author={Sankarnarayanan, Brahadeesh},
       title={$5$-list-coloring toroidal $6$-regular triangulations in linear time},
        date={2021},
        ISSN={2331-8422/e},
      eprint={\arxiv{2106.01634}},
      status={submitted for publication}
}

\bib{BohMohSti99}{article}{
      author={B{\"o}hme, Thomas},
      author={Mohar, Bojan},
      author={Stiebitz, Michael},
       title={Dirac's map-color theorem for choosability},
        date={1999},
        ISSN={0364-9024 (Print), 1097-0118 (Online)},
     journal={J. Graph Theory},
      volume={32},
      number={4},
       pages={327\ndash 339},
         doi={10.1002/(SICI)1097-0118(199912)32:4<327::AID-JGT2>3.0.CO;2-B},
      review={\MR{1722227}},
      review={\Zbl{941.05025}},
}

\bib{Bro41}{article}{
      author={Brooks, Rowland~Leonard},
       title={On colouring the nodes of a network},
        date={1941},
        ISSN={0008-1981 (Print)},
     journal={Proc. Cambridge Philos. Soc.},
      volume={37},
      number={2},
       pages={194\ndash 197},
         doi={10.1017/S030500410002168X},
      review={\MR{12236}},
      review={\Zbl{27.26403}},
}

\bib{Dir52c}{article}{
      author={Dirac, Gabriel~Andrew},
       title={Map-colour theorems},
        date={1952},
        ISSN={0008-414X (Print), 1496-4279 (Online)},
     journal={Canad. J. Math.},
      volume={4},
       pages={480\ndash 490},
         doi={10.4153/cjm-1952-043-9},
      review={\MR{50869}},
      review={\Zbl{47.42203}},
}

\bib{Dyc88}{article}{
      author={Dyck [von Dyck], Walther},
       title={Beitr{\"a}ge zur {Analysis} situs. {I}},
    language={German},
        date={1888},
        ISSN={0025-5831 (Print), 1432-1807 (Online)},
     journal={Math. Ann.},
      volume={32},
      number={4},
       pages={457\ndash 512},
         doi={10.1007/BF01443580},
      review={\MR{1510522}},
      review={\JFM{20.0519.04}}
}

\bib{ErdRubTay80}{article}{
      author={Erd{\"o}s [Erd{\H o}s], Paul},
      author={Rubin, Arthur~Leonard},
      author={Taylor, Herbert},
       title={Choosability in graphs},
  conference={
  		   title={Proceedings of the {West Coast} conference on combinatorics,
  graph theory and computing},
  		 address={Humboldt State Univ., Arcata, Calif.},
		    date={1979},
             },
		book={
      		series={Congr. Numer.},
      		volume={26},
   		 publisher={Util. Math. Publ. Inc.},
     	   address={Winn., Manit.},
        	  date={1980},		
		     },
       pages={125\ndash 157},
      eprint={\url{https://old.renyi.hu/~p_erdos/1980-07.pdf}},
      review={\MR{593902}},
      review={\Zbl{469.05032}},
}

\bib{Fra34}{article}{
      author={Franklin, Philip},
       title={A six color problem},
        date={1934},
        ISSN={0097-1421 (Print)},
     journal={J. Math. Phys.},
      volume={13},
      number={1--4},
       pages={363\ndash 369},
         doi={10.1002/sapm1934131363},
      review={\Zbl{10.27502}},
      review={\JFM{60.0501.04}},
}

\bib{Gut96}{article}{
      author={Gutner, Shai},
       title={The complexity of planar graph choosability},
        date={1996},
        ISSN={0012-365X (Print), 1872-681X (Online)},
     journal={Discrete Math.},
      volume={159},
      number={1--3},
       pages={119\ndash 130},
         doi={10.1016/0012-365X(95)00104-5},
      review={\MR{1415287}},
      review={\Zbl{865.05066}},
}

\bib{Hea90}{article}{
      author={Heawood, Percy~John},
       title={Map-colour theorem},
        date={1890},
     journal={Quart. J. Pure Appl. Math.},
      volume={24},
      number={96},
       pages={332\ndash 338},
      eprint={\url{http://resolver.sub.uni-goettingen.de/purl?PPN600494829_0024}},
      review={\JFM{22.0562.02}},
}

\bib{Kie00}{article}{
      author={Kierstead, Henry~A.},
       title={On the choosability of complete multipartite graphs with part
  size three},
        date={2000},
        ISSN={0012-365X (Print), 1872-681X (Online)},
     journal={Discrete Math.},
      volume={211},
      number={1--3},
       pages={255\ndash 259},
         doi={10.1016/S0012-365X(99)00157-0},
      review={\MR{1735336}},
      review={\Zbl{944.05031}},
}

\bib{KraSkr06}{article}{
      author={Kr{\'a}{\v l}, Daniel},
      author={{\v S}krekovski, Riste},
       title={The last excluded case of {Dirac's} map-color theorem for
  choosability},
        date={2006},
        ISSN={0364-9024 (Print), 1097-0118 (Online)},
     journal={J. Graph Theory},
      volume={51},
      number={4},
       pages={319\ndash 354},
         doi={10.1002/jgt.20136},
      review={\MR{2207576}},
      review={\Zbl{1094.05026}},
}

\bib{Mir96}{article}{
      author={Mirzakhani, Maryam},
       title={A small non-$4$-choosable planar graph},
        date={1996},
        ISSN={1183-1278 (Print), 2689-0674 (Online)},
     journal={Bull. Inst. Combin. Appl.},
      volume={17},
       pages={15\ndash 18},
      review={\MR{1386951}},
      review={\Zbl{860.05029}},
}

\bib{NakNogOze19}{article}{
      author={Nakamoto, Atsuhiro},
      author={Noguchi, Kenta},
      author={Ozeki, Kenta},
       title={Extension to $3$-colorable triangulations},
        date={2019},
        ISSN={0895-4801 (Print), 1095-7146 (Online)},
     journal={SIAM J. Discrete Math.},
      volume={33},
      number={3},
       pages={1390\ndash 1414},
         doi={10.1137/17M1135505},
      review={\MR{3986554}},
      review={\Zbl{1419.05078}},
}

\bib{Neg83}{article}{
      author={Negami, Seiya},
       title={Uniqueness and faithfulness of embedding of toroidal graphs},
        date={1983},
        ISSN={0012-365X (Print), 1872-681X (Online)},
     journal={Discrete Math.},
      volume={44},
      number={2},
       pages={161\ndash 180},
         doi={10.1016/0012-365X(83)90057-2},
      review={\MR{689809}},
      review={\Zbl{508.05033}},
}

\bib{Neg84}{article}{
      author={Negami, Seiya},
       title={Classification of $6$-regular {Klein-bottlal} graphs},
        date={1984},
        ISSN={0912-2370 (Print)},
     journal={Res. Rep. Inf. Sci. T.I.T.},
      volume={A-96},
}

\bib{NoeReeWu15}{article}{
      author={Noel, Jonathan~A.},
      author={Reed, Bruce~A.},
      author={Wu, Hehui},
       title={A proof of a conjecture of {Ohba}},
        date={2015},
        ISSN={0364-9024 (Print), 1097-0118 (Online)},
     journal={J. Graph Theory},
      volume={79},
      number={2},
       pages={86\ndash 102},
         doi={10.1002/jgt.21819},
      eprint={\arxiv{1211.1999}},
      review={\MR{3338124}},
      review={\Zbl{1320.05045}},
}

\bib{Ohb02}{article}{
      author={Ohba, Kyoji},
       title={On chromatic-choosable graphs},
        date={2002},
        ISSN={0364-9024 (Print), 1097-0118 (Online)},
     journal={J. Graph Theory},
      volume={40},
      number={2},
       pages={130\ndash 135},
         doi={10.1002/jgt.10033},
      review={\MR{1899118}},
      review={\Zbl{1004.05030}},
}

\bib{PosTho18}{article}{
      author={Postle, Luke},
      author={Thomas, Robin},
       title={Hyperbolic families and coloring graphs on surfaces},
        date={2018},
        ISSN={2330-0000 (Online)},
     journal={Trans. Amer. Math. Soc. Ser. B},
      volume={5},
       pages={167\ndash 221},
         doi={10.1090/btran/26},
      eprint={\arxiv{1609.06749}},
      review={\MR{3882883}},
      review={\Zbl{1401.05126}},
}

\bib{Rin54a}{article}{
      author={Ringel, Gerhard},
       title={Bestimmung der {Maximalzahl} der {Nachbargebiete} auf
  nichtorientierbaren {Fl{\"a}chen}},
    language={German},
        date={1954},
        ISSN={0025-5831 (Print), 1432-1807 (Online)},
     journal={Math. Ann.},
      volume={127},
      number={1},
       pages={181\ndash 214},
         doi={10.1007/BF01361120},
      review={\MR{63011}},
      review={\Zbl{55.17001}},
}

\bib{Rin65}{article}{
      author={Ringel, Gerhard},
       title={Das {Geschlecht} des vollst{\"a}ndigen paaren {Graphen}},
    language={German},
        date={1965},
        ISSN={0025-5858 (Print), 1865-8784 (Online)},
     journal={Abh. Math. Sem. Univ. Hambg.},
      volume={28},
      number={3--4},
       pages={139\ndash 150},
         doi={10.1007/BF02993245},
      review={\MR{189012}},
      review={\Zbl{132.21203}},
}

\bib{RinYou68}{article}{
      author={Ringel, Gerhard},
      author={Youngs, John William~Theodore},
       title={Solution of the {Heawood} map-coloring problem},
        date={1968},
        ISSN={0027-8424 (Print), 1091-6490 (Online)},
     journal={Proc. Natl. Acad. Sci. USA},
      volume={60},
      number={2},
       pages={438\ndash 445},
         doi={10.1073/pnas.60.2.438},
      review={\MR{228378}},
      review={\Zbl{155.51201}},
}

\bib{StiTuzVoi09}{article}{
      author={Stiebitz, Michael},
      author={Tuza, Zsolt},
      author={Voigt, Margit},
       title={On list critical graphs},
        date={2009},
        ISSN={0012-365X (Print), 1872-681X (Online)},
     journal={Discrete Math.},
      volume={309},
      number={15},
       pages={4931\ndash 4941},
         doi={10.1016/j.disc.2008.05.021},
      review={\MR{2531640}},
      review={\Zbl{1229.05157}},
}

\bib{Tho91}{article}{
      author={Thomassen, Carsten},
       title={Tilings of the torus and the {Klein} bottle and vertex-transitive
  graphs on a fixed surface},
        date={1991},
        ISSN={0002-9947 (Print), 1088-6850 (Online)},
     journal={Trans. Amer. Math. Soc.},
      volume={323},
      number={2},
       pages={605\ndash 635},
         doi={10.2307/2001547},
      review={\MR{1040045}},
      review={\Zbl{722.05031}},
}

\bib{Tho94a}{article}{
      author={Thomassen, Carsten},
       title={Five-coloring graphs on the torus},
        date={1994},
        ISSN={0095-8956 (Print), 1096-0902 (Online)},
     journal={J. Combin. Theory Ser. B},
      volume={62},
      number={1},
       pages={11\ndash 33},
         doi={10.1006/jctb.1994.1052},
      review={\MR{1290628}},
      review={\Zbl{805.05022}},
}

\bib{Tho94b}{article}{
      author={Thomassen, Carsten},
       title={Every planar graph is $5$-choosable},
        date={1994},
        ISSN={0095-8956 (Print), 1096-0902 (Online)},
     journal={J. Combin. Theory Ser. B},
      volume={62},
      number={1},
       pages={180\ndash 181},
         doi={10.1006/jctb.1994.1062},
      review={\MR{1290638}},
      review={\Zbl{805.05023}},
}

\bib{Viz76}{article}{
      author={Vizing, Vadym~Georgi{\u \i}ovych},
       title={F{\"a}rbung der {Ecken} eines {Graphen} mit vorgeschriebenen
  {Farben} \textnormal{[Coloring the vertices of a graph in prescribed colors]}},
    language={Russian},
        date={1976},
        ISSN={0136-1228 (Print)},
     journal={Metody Diskretn. Anal.},
      volume={29},
       pages={3\ndash 10},
      review={\MR{498216}},
      review={\Zbl{362.05060}},
}

\bib{Voi93}{article}{
      author={Voigt, Margit},
       title={List colourings of planar graphs},
        date={1993},
        ISSN={0012-365X (Print), 1872-681X (Online)},
     journal={Discrete Math.},
      volume={120},
      number={1--3},
       pages={215\ndash 219},
         doi={10.1016/0012-365X(93)90579-I},
      review={\MR{1235909}},
      review={\Zbl{790.05030}},
}

\bib{VoiWir97}{article}{
      author={Voigt, Margit},
      author={Wirth, B.},
       title={On $3$-colorable non-$4$-choosable planar graphs},
        date={1997},
        ISSN={0364-9024 (Print), 1097-0118 (Online)},
     journal={J. Graph Theory},
      volume={24},
      number={3},
       pages={233\ndash 235},
         doi={10.1002/(SICI)1097-0118(199703)24:3<233::AID-JGT4>3.3.CO;2-C},
      review={\MR{1431668}},
      review={\Zbl{868.05025}},
}

\end{biblist}
\end{bibdiv}

\end{document}